\numberwithin{equation}{section}
\def\b{\begin{eqnarray}}
\def\e{\end{eqnarray}}
\newcommand{\R}{{\mathbb R}}
\newcommand{\N}{{\mathbb N}}
\newcommand{\C}{{\mathbb C}}
\newcommand{\Z}{{\mathbb Z}}
\newcommand{\HH}{{\mathcal H}}
\newcommand{\FF}{{\mathcal F}}
\theoremstyle{plain}
\newtheorem{theorem}{Theorem}[section]
\newtheorem{corollary}[theorem]{Corollary}
\newtheorem{lemma}[theorem]{Lemma}
\theoremstyle{definition}
\newtheorem{example}[theorem]{Example}
\theoremstyle{remark}
\renewcommand{\Re}{{\rm Re}\,}
\renewcommand{\Im}{{\rm Im}\,}
\renewcommand{\phi}{\varphi}
\renewcommand{\epsilon}{\varepsilon}
\newcommand{\vep}{\varepsilon}
\begin{document}
\sloppy
\title[De Branges spaces and Fock spaces]
{De Branges spaces and Fock spaces}

\author{Anton Baranov, H\'el\`ene Bommier-Hato}
\address{Anton Baranov,
\newline Department of Mathematics and Mechanics, St.~Petersburg State University, St.~Petersburg, Russia,
\newline National Research University Higher School of Economics, St.~Petersburg, Russia,
\newline {\tt anton.d.baranov@gmail.com}
\smallskip
\newline \phantom{x}\,\, H\'el\`ene Bommier-Hato,
\newline I2M, Aix-Marseille Universit\'e, CNRS, Marseille, France,
\newline Faculty of Mathematics, University of Vienna, Vienna, Austria,
\newline {\tt helene.bommier@gmail.com}
}
\thanks{The work is supported by Russian Science Foundation grant 14-41-00010.}

\begin{abstract}
Relations between two classes of Hilbert spaces of entire functions, 
de Branges spaces and Fock-type spaces with non-radial weights, are studied. 
It is shown that any de Branges space can be realized as a Fock-type space 
with equivalent area norm, and several constructions of a representing weight 
are suggested. For some special classes of weights (e.g., weights depending
on the imaginary part only) the corresponding de Branges spaces are explicitly 
described.
\end{abstract}

\maketitle

\section{Introduction}

Fock-type spaces and de Branges spaces are arguably two main examples
of reproducing kernel Hilbert spaces (RKHS) of entire functions. 
They include as special cases the Bargmann--Segal--Fock space
(famous for its connections with quantum mechanics and time-frequency analysis) 
and the Paley--Wiener space of bandlimited functions. The aim of the present paper 
is to establish a link between these two classes of spaces.

\subsection{Fock and de Branges spaces}
Let us give the necessary definitions. 
For a measurable function $W: \C \rightarrow(0,\infty)$, 
one defines the {\it Fock-type space}
$$
      \FF_W:=\left\{F\text{ entire}: \ 
      \left\|F\right\|^{2}_{\FF_W}=\int_{\C}\left|F(z)W(z)\right|^2dm(z)<\infty\right\},
$$
where $dm$ stands for the area Lebesgue measure in the complex 
plane $\mathbb{C}$. 
A special attention in the literature was addressed to radial weights (i.e., 
$W(z) = W(|z|)$; see, e.g., \cite{S, bdk, bom} and references therein. 
The classical Bargmann--Segal--Fock space corresponds to $W(z)= e^{-\frac{|z|^2}{2}}$.
However, in the present paper we will be mainly interested in non-radial weights.

An entire function $E$ is said to be in the {\it Hermite--Biehler class} 
if it satisfies $\left|E(z)\right|>\left|E^{\sharp}(z)\right|$ for $z\in\C^+$. 
Here $\C^+$ denotes the upper half-plane and $E^\sharp(z) = \overline{E(\overline{z})}$.
We also always assume in what follows that $E$ does not vanish on $\mathbb{R}$.

Any such function $E$ determines a {\it de Branges space} (see \cite{db})
$$
	\HH(E)=\left\{F\text{ entire}:\ \frac{F}{E},\frac{F^{\sharp}}{E}\in 
        H^{2}(\C^+)\right\},
$$
($H^2= H^2(\C^+)$ being the standard Hardy space in the upper half-plane)
which is a Hilbert space when equipped with the norm 
$$
\left\|F\right\|^{2}_{E}=\int_{\R} \left|\frac{F(x)}{E(x)}\right|^{2}dx.
$$
There exist alternative descriptions of de Branges spaces:
\medskip
\begin{itemize} 
\item 
Axiomatic (see \cite[Theorem 23]{db}) -- any reproducing kernel Hilbert space
of entire functions $\mathcal{H}$ such that the mapping $F\mapsto
F^\sharp$ preserves the norm in $\mathcal{H}$ and the mapping $F\mapsto
\frac{z-\overline w}{z-w}F(z)$ is an isometry in $\mathcal{H}$
whenever $w\in\C\setminus\mathbb{R}$, $F(w) = 0$, is of the form
$\HH(E)$ for some $E$ in the Hermite--Biehler class); 
\smallskip
\item
Via the {spectral data} (see Section \ref{spect}).
\end{itemize}
\medskip

Spaces $\HH(E)$ were introduced by L.~de~Branges in his celebrated solution of 
the inverse spectral problem for canonical systems of differential equations.
At the same time, these spaces are 
a very interesting object from the function theory point 
of view (see, e.g., \cite{ls, os, hm1, hm2, kw, bbb}). The Paley--Wiener space $PW_a$ 
(which consists of entire functions of exponential type 
at most $a$ which are square integrable on $\mathbb{R}$) is a de Branges space corresponding to
$E(z) = e^{-iaz}$.

Any de Branges space is a RKHS (i.e., 
evaluation functionals are bounded). The Fock space $\FF_W$
also is a RKHS if we assume, e.g.,
that
\begin{equation}\label{bab1}
\inf_{z\in K} |W(z)| >0
\end{equation}
for any compact $K\subset \C$. In what follows we always impose this
restriction on $W$.

Note that if two RKHS coincide as sets then, by the Closed Graph Theorem,
their norms are automatically equivalent.

Recently, in \cite[Theorem 1.2]{bbb} a complete description was found of those 
de Branges spaces which coincide with {\it radial} Fock spaces as sets, 
{\it with equivalence of norms}. (It is clear that a Fock space can not 
isometrically coincide with a de Branges space since division by the Blaschke factor 
$\frac{z-w}{z-\overline w}$ is not an isometry.) These turn out to be very small 
spaces generated by functions
$E$ of slow growth (in particular, 
satisfying $\log|E(z)| =O(\log^2 |z|)$, $|z|\to \infty$). Interestingly, 
this is exactly the class of de Branges spaces where any complete and minimal system of 
reproducing kernels (RK) admits the so-called spectral synthesis (see \cite{bbb}).

Also, in \cite{bbb1} the inverse problem was considered: given a radial Fock space
$\FF_W$ with $W(z) = \exp(-\phi(|z|))$ for some increasing $\phi$, when does it coincide 
with some de Branges space? 
Under some regularity conditions on $\phi$, 
the same growth gives a sharp threshold here: if $\phi(r) = O(\log^2 r)$, then 
$\FF_W$ has a Riesz basis of (normalized) RK
corresponding to real points and, thus, 
coincides with a de Branges space (this is a characteristic property of 
de Branges spaces that they have orthogonal bases of RK).  
If, on the contrary, $\log^2 r = o(\phi(r))$, $r\to\infty$, 
then $\FF_W$ has no Riesz bases of RK and so it does not coincide with 
any de Branges space.

In view of these results one may ask whether it is possible to represent any de 
Branges space as a Fock-type space $\FF_W$ with a {\it non-radial} weight, i.e., to replace 
the norm given by an integral over $\mathbb{R}$ by an equivalent 
area integral. One of the main results of the present paper says 
that it is always possible. However, such weight $W$ 
(we call it a {\it representing Fock weight for $\HH(E)$})
is by no means unique, 
and there arises a question how to choose it in a ``canonical'' way so 
that it encompasses in the most economical way the
properties of the space. This is the question we address in the present paper. 
\bigskip


\section{Main results} 

\subsection{Natural weight and the Paley--Wiener spaces}
We start with the simple special case of the problem:
{\it find a weight $W$ such that $PW_a = \FF_W$}. Recall that all equalities 
of spaces are understood as equalities of sets with equivalence 
(but not equality) of norms.

In what follows we write $U(x)\lesssim V(x)$ (or, equivalently,
$V(x)\gtrsim U(x)$) if there is a constant $C$ such that
$U(x)\leq CV(x)$ holds for all values of the parameter $x$. 
We write $U(x)\asymp V(x)$ if both $U(x)\lesssim V(x)$ and $V(x)\lesssim U(x)$.

Note that for an arbitrary de Branges space $\HH(E)$ we have $F/E\in H^2$ for any 
$F\in \HH(E)$ and so
$$
\sup_{y>0} \int_\R \bigg|\frac{F(x+iy)}{E(x+iy)}\bigg|^2dx = \|F\|^2_E.
$$
Hence, 
$$
   \int_{\C^+} \frac{1}{(y+1)^2} \bigg|\frac{F(x+iy)}{E(x+iy)}\bigg|^2 dxdy \le \|F\|^2_E.
$$
This leads to the following ``natural'' candidate for the weight $W$:
\begin{equation}
   \label{wb}
   \begin{aligned}
   W_0(z) & :=\frac{1}{\left|E(z)\right|\left(1+\Im z\right)}, \qquad & z\in \C^+\cup\R, \\
   W_0(\overline z) & :=  W(z),  & z\in \C^-.
   \end{aligned}
\end{equation}
Since $F^\sharp/E$ also is in $H^2$ we always have $\HH(E) \subset \FF_{W_0}$.

The weight $W_0$ depends on $E$, but usually it will be clear from the 
context for which space it is constructed and so we do not introduce $E$
into the notation. 
Clearly, $(1+y)^{-1}$ in the definition of the weight can be replaced 
by $(1+y)^{-\alpha}$
with any $\alpha\in (1/2, \infty)$, as well as by an arbitrary positive 
function from $L^2(0, \infty)$ with at most power decay. The choice of this function is of 
no importance. 
                
If $E(z) = e^{-iaz}$, then $W_0(z) = (1+|\Im z|)^{-1}e^{-a|\Im z|}$. It turns out that
this weight will define an equivalent norm on $PW_a$. Indeed, let $F\in \FF_W$
and $z = x+iy \in\C$. We denote by $D(z, r)$ the open disc with the center $z$ 
of radius $r$. Applying the subharmonicity property to $|F|^2$ in the disc $D(z, |y|+1)$,
we get
$$
|F(z)|^2 \le \frac{1}{\pi(|y|+1)^2} \int_{D(z, |y|+1)} |F(\zeta)|^2 dm(\zeta)
\lesssim 
e^{2a |y|} \int_{D(z, |y|+1)} |F(\zeta)W(\zeta)|^2 dm(\zeta).
$$                                                      
with a constant independent on $F$ and $z$. 
Hence, $|F(x)|^2 \lesssim \int_{D(x,1)} |F(\zeta)W(\zeta)|^2 dm(\zeta)$
and integrating over $x$ we see that $F\in L^2(\R)$. Also, by the above estimates, 
$F$ is of exponential type at most $a$ and so $F\in PW_a$. Thus, $\FF_{W_0} = PW_a$.

It is interesting to note that for any weight of the form
$W(z) = (1+|\Im z|)^{\alpha} e^{-a|\Im z|}$, $\alpha>0$, the corresponding
Fock space does not coincide with a de Branges space. 
The reason for that is that the space $\FF_W$ has no Riesz bases 
of normalized RK. This result was communicated to us by A.~Borichev. 
It follows also from our Theorem \ref{depy} which says that the Paley--Wiener spaces
are the only de Branges space which have a representing weight depending on $\Im z$. 

As we will see, the above simple argument can not be extended to general 
de Branges spaces: it may happen that $\HH(E) \ne \FF_{W_0}$. 
However, we are able to describe the class of de Branges spaces
for which the ``natural'' weight $W_0$ generates the same space. 
Recall that for any Hermite--Biehler function $E$ one can find a 
decreasing branch of the argument on $\R$ (see \cite{db}), that is there 
exists an increasing continuous function $\varphi$  (so-called {\it phase function})
such that
\begin{equation}
    \label{phase}
    E(x)=\left|E(x)\right|e^{-i\varphi(x)},\qquad x\in \R.
\end{equation}
If $Z_E=\left\{\overline z_n=x_n-iy_n\right\}\subset \C^-$ 
denotes the zero set of $E$ (note that in our notation $z_n \in \C^+$), 
then there exists $a \geq 0$ such that
\begin{equation}\label{def phi'}
  \varphi'(x)=a+\sum_{n}\Im \frac{1}{x- z_n} 
  = a+\sum_{n} \frac{y_n}{|x-\overline z_n|^2}, 
\qquad x\in \R. 
\end{equation}

Now we can state our first main result. 

\begin{theorem}
\label{F Wb} 
Let $\HH(E)$ be a de Branges space and let $W_0$ be defined by \eqref{wb}. Then
the following are equivalent\textup:
\smallskip
\begin{enumerate}
	\item [(i)] The spaces $\HH(E)$  and $\FF_{W_0}$  coincide as sets\textup;
\smallskip
	\item [(ii)] $\varphi' \in L^{\infty}(\R)$.
\end{enumerate}
\end{theorem}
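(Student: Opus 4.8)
The plan is to prove the two implications separately, treating $\neg(ii)\Rightarrow\neg(i)$ first and in full detail, and $(ii)\Rightarrow(i)$ as the harder one. First a bit of bookkeeping: using $W_0(\bar z)=W_0(z)$ together with $|E(\bar z)|=|E^\sharp(z)|$ one rewrites
$$
\|F\|^2_{\FF_{W_0}}=\int_{\C^+}\frac{|F(z)|^2+|F^\sharp(z)|^2}{|E(z)|^2(1+\Im z)^2}\,dm(z),
$$
and one recalls that $\HH(E)\subseteq\FF_{W_0}$ with $\|\cdot\|_{\FF_{W_0}}\lesssim\|\cdot\|_E$ always. By the Closed Graph remark in the introduction, $(i)$ is equivalent to the equivalence of the two norms on $\HH(E)$, so for $\neg(ii)\Rightarrow\neg(i)$ it suffices to violate this equivalence, and for $(ii)\Rightarrow(i)$ it suffices to show $\FF_{W_0}\subseteq\HH(E)$ with $\|\cdot\|_E\lesssim\|\cdot\|_{\FF_{W_0}}$.

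For $\neg(ii)\Rightarrow\neg(i)$, assume $\varphi'\notin L^\infty$ and pick $x_k\in\R$ with $\varphi'(x_k)\to\infty$; I would test the norms on the reproducing kernels $K_{x_k}$ of $\HH(E)$. On one hand $\|K_{x_0}\|_E^2=K_{x_0}(x_0)=\varphi'(x_0)|E(x_0)|^2/\pi$. On the other hand, writing $\Theta=E^\sharp/E$ and $\lambda=\overline{E(x_0)}/E(x_0)$ (so $|\lambda|=1$), from the explicit kernel formula one gets $K_{x_0}/E=\frac{E(x_0)}{2\pi i}\,h$ with $h(z)=\frac{\lambda-\Theta(z)}{x_0-z}$; thus $h\in H^2(\C^+)$ (a scalar multiple of $K_{x_0}/E$) with $\|h\|^2_{H^2}=4\pi\varphi'(x_0)$, and $|h(z)|\le 2/|z-x_0|$ on $\C^+$ since $|\Theta|<1$ there. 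As $K_{x_0}$ is real on $\R$, $K_{x_0}^\sharp=K_{x_0}$, so $\|K_{x_0}\|^2_{\FF_{W_0}}=\frac{|E(x_0)|^2}{2\pi^2}\int_{\C^+}\frac{|h(z)|^2}{(1+\Im z)^2}\,dm(z)$. I would bound this integral by splitting $\C^+$ at the disc $|z-x_0|<R$ with $R=1/\varphi'(x_0)$ (here $R\le1$): on $\{|z-x_0|<R\}\subseteq\{0<\Im z<R\}$, using $\|h(\cdot+iy)\|_{L^2(\R)}\le\|h\|_{H^2}$ for $y>0$, the contribution is $\le\int_0^R\|h\|^2_{H^2}\,dy=R\|h\|^2_{H^2}=4\pi$; on the complement, using $|h(z)|\le2/|z-x_0|$ and the elementary estimate $\int_{\{|z|\ge R,\ \Im z>0\}}\frac{dm(z)}{|z|^2(1+\Im z)^2}\lesssim\log(2/R)$, the contribution is $\lesssim\log(2+\varphi'(x_0))$. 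Hence $\|K_{x_0}\|^2_{\FF_{W_0}}\lesssim|E(x_0)|^2\log(2+\varphi'(x_0))$, so $\|K_{x_k}\|^2_{\FF_{W_0}}/\|K_{x_k}\|^2_E\lesssim\log(2+\varphi'(x_k))/\varphi'(x_k)\to 0$, which precludes the equivalence of norms on $\HH(E)$, i.e.\ $\neg(i)$.

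For $(ii)\Rightarrow(i)$, assume $\varphi'\in L^\infty$. The idea is to repeat the Paley--Wiener argument from the introduction (subharmonicity of $|F|^2$ over discs) with the regularity of $|E|$ now supplied by the hypothesis. From $\varphi'(x_n)\ge 1/y_n$ one first gets $\inf_n y_n\ge 1/\|\varphi'\|_\infty>0$, and then, for $z\in\overline{\C^+}$ and $\zeta$ in a suitable disc $D(z,\rho(z))$ adapted to the local scale of $E$, the comparisons $|E(\zeta)|\asymp|E(z)|$ and $1+|\Im\zeta|\asymp 1+\Im z$, hence $W_0(\zeta)\asymp\bigl(|E(z)|(1+\Im z)\bigr)^{-1}$ there. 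Plugging this into the submean value inequality for $F\in\FF_{W_0}$ yields the pointwise bounds $|F(z)/E(z)|\lesssim P(|z|)\,\|F\|_{\FF_{W_0}}$ and $|F^\sharp(z)/E(z)|\lesssim P(|z|)\,\|F\|_{\FF_{W_0}}$ on $\C^+$ for some polynomial $P$, together with $\int_\R|F(x)/E(x)|^2\,dx\lesssim\|F\|^2_{\FF_{W_0}}$ (by integrating the corresponding local estimate along $\R$ and using Fubini). The polynomial bound shows $(z+i)^{-N}F/E$ and $(z+i)^{-N}F^\sharp/E$ lie in $H^\infty(\C^+)\subseteq N^+(\C^+)$ for large $N$, hence $F/E,\,F^\sharp/E\in N^+(\C^+)$ (multiplication by the outer polynomial $(z+i)^N$ preserves $N^+$); combined with the $L^2(\R)$ bound and Smirnov's theorem $N^+(\C^+)\cap L^2(\R)=H^2(\C^+)$, this gives $F/E,\,F^\sharp/E\in H^2(\C^+)$, i.e.\ $F\in\HH(E)$ with $\|F\|_E\lesssim\|F\|_{\FF_{W_0}}$, and hence $\HH(E)=\FF_{W_0}$.

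The main obstacle is precisely the $(ii)\Rightarrow(i)$ direction, and within it the quantitative control of $|E|$ on small discs. Unlike the Paley--Wiener case, $|E|$ need not be comparable on discs of a fixed radius even when $\varphi'\in L^\infty$ (the logarithm of $|E|$ is only of bounded mean oscillation on $\R$), so the radii $\rho(z)$ must be chosen adaptively according to the ``natural scale'' that $E$ carries — the delicate point being to extract this scale from $\varphi'\in L^\infty$ and to control the resulting covering multiplicity when one passes back from the area integral to $\int_\R|F/E|^2$. Making this estimate come out with the right constants is the technical heart of the proof; the remaining ingredients (the kernel computation above, the $N^+$/Smirnov step, and the reformulation of the norm) are routine.
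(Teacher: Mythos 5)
Your argument for $\neg\mathrm{(ii)}\Rightarrow\neg\mathrm{(i)}$ is correct, and it takes a genuinely different route from the paper's. The paper also tests on the reproducing kernels ($g_x=K_x/\overline{E(x)}$), but it first needs an auxiliary lemma (testing on $E(z)\sqrt{y_n}/(z-\overline z_n)$) to get $\inf_n \Im z_n>0$, which is then used to control $\max|\Theta'|\lesssim\varphi'(x)$ on a small rectangle around $x$; splitting $\C^+$ into three regions and optimizing the rectangle size yields $\|g_x\|^2_{\FF_{W_0}}\lesssim[\varphi'(x)]^{2/3}$ against $\|g_x\|^2_E\asymp\varphi'(x)$. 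Your splitting at the scale $R=1/\varphi'(x_0)$, using only $\|h\|^2_{H^2}=4\pi\varphi'(x_0)$ on horizontal lines and the pointwise bound $|h(z)|\le 2/|z-x_0|$, needs no information about the zeros at all and gives the sharper bound $\|K_{x_0}\|^2_{\FF_{W_0}}\lesssim |E(x_0)|^2\log(2+\varphi'(x_0))$; the kernel identities, the symmetry reduction to $\C^+$, and the far-field integral estimate all check out.

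The direction (ii)$\Rightarrow$(i) is where there is a genuine gap, and the step you flag as the ``technical heart'' is not merely unfinished: the plan of combining subharmonicity of $|F|^2$ with comparability $|E(\zeta)|\asymp|E(z)|$ on adaptive discs $D(z,\rho(z))$ runs into a real obstruction. First, unit-scale comparability genuinely fails under (ii): take $E(z)=e^{q(z)-i\tau z/2}$ with $\tau>0$ and $q$ real entire; this is a zero-free Hermite--Biehler function with $\varphi'\equiv\tau/2$, while $|E(x)|=e^{q(x)}$ may oscillate arbitrarily fast on $\R$, so the radii $\rho(z)$ are forced to shrink. Second, once $\inf\rho=0$ the passage back to the line integral breaks: the submean inequality gives $|F(x)/E(x)|^2\lesssim\rho(x)^{-2}\int_{D(x,\rho(x))}|FW_0|^2dm$, and integrating over $x\in\R$ and applying Fubini produces the weight $\rho(\zeta)^{-1}$ against $|FW_0|^2\,dm$, which is not dominated by $\|F\|^2_{\FF_{W_0}}$; no choice of constants repairs this. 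The cure --- and this is in effect what the paper does, since it deduces (ii)$\Rightarrow$(i) from Theorem \ref{main} (under (ii) one has $\varphi'(x_n)\ge y_n^{-1}$, hence $\inf_n y_n>0$, hence by \eqref{lev} the distance $d_\vep$ is bounded below and the weight \eqref{mw} is comparable to $W_0$) --- is to apply the sub-mean-value property to $|F/E|^2$ instead of $|F|^2$, which removes any need to compare values of $|E|$. Under (ii) the zeros of $E$ lie at depth at least $1/\|\varphi'\|_\infty$ below $\R$, and a one-line estimate of the Blaschke sum (or \eqref{lev}) gives $|\Theta|\ge\delta$ on a strip $\{0\le\Im z\le h\}$, hence $|E(z)|\ge\delta|E(\overline z)|$ for $-h\le\Im z\le 0$; then $F/E$ is analytic on the fixed discs $D(x,h/2)$, subharmonicity of $|F/E|^2$ yields $\int_\R|F/E|^2\lesssim\|F\|^2_{\FF_{W_0}}$ with bounded covering multiplicity, and the same device gives a uniform bound for $|F/E|$ in $\C^+$, after which your Smirnov-class step (or the paper's Lemma \ref{phi}) completes the inclusion $\FF_{W_0}\subset\HH(E)$.
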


De Branges spaces with the property $\varphi' \in L^{\infty}(\R)$ 
(sublinear growth of the argument) have many
nice properties similar to the classical Paley--Wiener spaces (see \cite{dy} or 
\cite{hm2}).


\subsection{General de Branges spaces}

We have seen that the weight $W_0$ solves the problem only for a special class
of de Branges spaces. In general we need to modify it to encompass the 
subtle behavior of the elements of $\HH(E)$ near $\R$ in the case when zeros of $E$
approach the real axis. We will do this using the level sets of inner
functions.

Note that any Hermite--Biehler function $E$
gives rise to a meromorphic inner function in $\C^+$, namely
\begin{equation}
   \label{def Theta}
   \Theta=\frac{E^{\sharp}}{E},
\end{equation}
which has the form 
\begin{equation}
   \label{merom}
\Theta(z)=e^{i\tau z}\prod_{n}\epsilon_n \frac{z-z_n}{z-\overline z_n},\qquad z\in\C, 
\end{equation}
where $\overline{z}_n$ are zeros of $E$, $\epsilon_n=\frac{\left|z_n^2+1\right|}{z_n^2+1}$ 
(for $z_n=i$, $\epsilon_n=1$) and $\tau$ is a nonnegative constant. Note that 
$|\Theta'(x)| = 2\phi'(x)$, $x\in \R$.

With each inner function $\Theta$ one can associate the {\it model} ({\it backward shift 
invariant}) subspace $K_\Theta = H^2\ominus \Theta H^2$ of the Hardy space $H^2$.
The map $F\mapsto F/E$ is a canonical unitary map from $\HH(E)$ onto $K_\Theta$
where $\Theta$ is defined by \eqref{def Theta} (this is easy to verify; see, 
e.g., \cite[Theorem 2.10]{hm1}). 

The reproducing kernel of $\HH(E)$ is given by 
$$
   K_w(z)=\frac{i}{2\pi}\frac{E(z)\overline{E(w)}-E^{\sharp}(z)
   \overline{E^{\sharp}(w)}}{z-\overline w},\qquad z,w\in \C;
$$
in particular, we have $\|K_x\|^2_E = K_x(x) =
|E(x)|^2 \varphi'(x)/\pi$, $x\in \R$.

Note also that $K_w(z) = E(z)\overline{E(w)} k_w(z)$ where
$$
k_w(z) = \frac{i}{2\pi} \frac{1-\overline{\Theta(w)}\Theta(z)}{z-\overline{w}}
$$
is the reproducing kernel of $K_\Theta$ 
and 
$$
\|k_z\|_2^2 = \frac{1-|\Theta(z)|^2}{4\pi \Im z}, \quad z\in \C^+,\qquad
\|k_x\|_2^2 = |\Theta'(x)|/2\pi, \quad x\in \R;
$$
here $\|f\|_2$ denotes the standard $L^2(\R)$-norm.

Though the function $\Theta$ associated to a de Branges space is 
meromorphic in $\C$, we prefer now to work in a slightly more general setting. 
Let $\Theta$ be an arbitrary inner function in $\C^+$ (see, e.g., \cite{ga}). 
Let $\sigma(\Theta)$ be the so-called {\it spectrum} of $\Theta$, 
that is, the set of all $\zeta\in\overline {\mathbb{C^+}}$
such that $\liminf\limits_{z\to\zeta,\, z\in\mathbb{C^+}}|\Theta(z)|=0$.
Equivalently, $\sigma(\Theta)$ is the smallest closed subset of
${\mathbb{C^+}}\cup \R$ containing the zeros $z_n$ of $\Theta$
and the support of the singular measure associated with the 
singular factor in $\Theta$. In the case of meromorphic $\Theta$,
$\sigma(\Theta)$ is just the zero set $\{z_n\}$.

Given $\Theta$ and $\varepsilon\in (0,1)$, consider the sublevel sets
$$
\Omega_\varepsilon=\{z\in\mathbb{C^+}: |\Theta(z)|<\varepsilon\}
\qquad\text{and} \qquad \Omega^c_\varepsilon = (\C^+\cup\R) \setminus\Omega_\varepsilon.
$$
For $z\in\C^+ \cup \mathbb{R}$ put
$$
d_\varepsilon(z)={\rm dist} (z,\Omega_\varepsilon);\qquad\quad
d_0(z)={\rm dist} (z,\sigma(\Theta)).
$$
Then, for $0<\varepsilon<\delta<1$ there exist constants $A_1,A_2>0$ depending only on 
$\varepsilon, \delta$ (but not on $\Theta$!) such that
\begin{equation}
   \label{lev}
  A_1\min (d_0(z), \|k_z\|_2^{-2}) \le d_\varepsilon(z) \le
  A_2 \min (d_0(z), \|k_z\|_2^{-2}), \qquad z\in \Omega_\delta^c.
\end{equation}
This inequality was established in \cite[Theorem 4.9]{bar1} for the case
$z=x\in\R$. The proof of \eqref{lev} is very similar to the proof in \cite{bar1};
we give its sketch in Section \ref{level}.

Any de Branges space can be realized as a Fock-type space with the 
representing weight depending on the geometry of level sets. By $\chi_e$ 
we denote the usual characteristic function of a set $e$.

\begin{theorem}
\label{main}
Let $\HH(E)$ be a de Branges space, let $W_0$ be defined by \eqref{wb},
and let $\Theta$ be the associated inner function.
For $0<\varepsilon<\delta<1$ put 
\begin{equation}
   \label{mw}
W(z) = W_0(z)\big(1+ (d_\vep(z))^{-1/2} \chi_{\Omega_\delta^c} (z) \big), \qquad
z\in \C^+ \cup\R,
\end{equation}
and $W(z) = W(\overline z)$, $z\in \C^-$. Then $\HH(E) = \FF_W$.
\end{theorem}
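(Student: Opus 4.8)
The plan is to establish the two inclusions $\HH(E)\subset\FF_W$ and $\FF_W\subset\HH(E)$ separately, using the canonical unitary $F\mapsto F/E$ to transfer everything to the model space $K_\Theta$ and the estimate \eqref{lev} to control the distance function $d_\vep$. Throughout I would write $f = F/E$, so that $F\in\HH(E)$ iff $f\in K_\Theta$, and recall $\|K_x\|_E^2 = |E(x)|^2\varphi'(x)/\pi$, i.e.\ $\|k_x\|_2^2\asymp\varphi'(x)$, while on $\C^+$ we have $\|k_z\|_2^2 = (1-|\Theta(z)|^2)/(4\pi\Im z)$.

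First, the easy inclusion $\HH(E)\subset\FF_W$. We already know $\HH(E)\subset\FF_{W_0}$ from the excerpt, so what must be added is the contribution of the term $(d_\vep(z))^{-1/2}\chi_{\Omega_\delta^c}(z)$; that is, I must show
$$
\int_{\Omega_\delta^c}\Big|\frac{F(z)}{E(z)}\Big|^2\frac{1}{(1+\Im z)^2}\,\frac{dm(z)}{d_\vep(z)} \lesssim \|F\|_E^2.
$$
The key point is the pointwise bound for elements of a model space: for $f\in K_\Theta$ and $z\in\Omega_\delta^c$ one has $|f(z)|^2\lesssim \|f\|_2^2\,\|k_z\|_2^2$ trivially, but more usefully $|f(z)|^2 \lesssim \frac{1}{d_\vep(z)}\int_{D(z,c\,d_\vep(z))}|f|^2\,dm$ (a Bernstein-type / reproducing estimate valid on the sublevel set complement, where $|\Theta|$ is bounded below), together with $\|k_z\|_2^{-2}\gtrsim d_\vep(z)$ on $\Omega_\delta^c$ from \eqref{lev}. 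Plugging in and using that the discs $D(z,c\,d_\vep(z))$ have bounded overlap (a Whitney-type covering argument, since $d_\vep$ is Lipschitz), one integrates and recovers $\|f\|_{H^2}^2\asymp\|F\|_E^2$.

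Second, and this is where the real work lies, the inclusion $\FF_W\subset\HH(E)$. Let $F\in\FF_W$; I must produce the bound $\int_\R |F(x)/E(x)|^2\,dx\lesssim\|F\|_{\FF_W}^2$ and show $F/E, F^\sharp/E\in H^2$. The natural route is to bound $|F(x)|^2$ for $x\in\R$ by a local area integral of $|FW|^2$, then integrate in $x$. Fix $x\in\R$ and set $r = r(x)$ to be (a fixed fraction of) $\min(d_0(x),\|k_x\|_2^{-2})\asymp d_\vep(x)$; on the disc $D(x,r)$ we have $W(\zeta)\asymp W_0(\zeta)\,(d_\vep(\zeta))^{-1/2}\gtrsim |E(\zeta)|^{-1} r^{-1/2}$ up to the harmless $(1+\Im\zeta)^{-1}$ factor, while $|E(\zeta)|\asymp|E(x)|$ on such a disc (this last comparison, that $|E|$ is essentially constant on discs of radius $\asymp d_\vep(x)$, is the technical heart — it follows from the product structure \eqref{merom} of $\Theta$ and the definition of $d_\vep$, but requires care when many zeros of $E$ cluster near $x$). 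Then subharmonicity of $|F|^2$ on $D(x,r)$ gives
$$
|F(x)|^2 \le \frac{1}{\pi r^2}\int_{D(x,r)}|F(\zeta)|^2\,dm(\zeta)
\lesssim \frac{|E(x)|^2}{r}\int_{D(x,r)}|F(\zeta)W(\zeta)|^2\,dm(\zeta),
$$
so $|F(x)/E(x)|^2 \lesssim r(x)^{-1}\int_{D(x,r(x))}|FW|^2\,dm$. Integrating over $x\in\R$ and invoking once more the bounded-overlap property of the Whitney-type family $\{D(x,r(x))\}_{x\in\R}$ (now with $r(x)^{-1}$ weights absorbed because the vertical extent of each disc is $\asymp r(x)$, so $\int_\R r(x)^{-1}\chi_{D(x,r(x))}(\zeta)\,dx\lesssim 1$ for fixed $\zeta$), one obtains $\|F/E\|_{L^2(\R)}\lesssim\|F\|_{\FF_W}$. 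Finally, to conclude $F/E\in H^2$ (not merely $L^2(\R)$) and likewise for $F^\sharp/E$, I would use that $F$ inherits from the area estimate a suitable growth bound (of the same order as functions in $\HH(E)$, via the pointwise estimate just proved extended to all of $\C$ using discs of radius $\asymp d_\vep(z)$), so that $F/E$ is of bounded type in $\C^+$ with nonpositive mean type and $L^2$ boundary values, hence in $H^2$; the axiomatic characterization of de Branges spaces quoted in the excerpt, or directly de Branges' criterion, then places $F$ in $\HH(E)$.

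The main obstacle I anticipate is the comparison $|E(\zeta)|\asymp|E(x)|$ for $\zeta\in D(x, c\,d_\vep(x))$ and, relatedly, the bounded-overlap property of the covering $\{D(x, c\,d_\vep(x))\}$: both hinge on $d_\vep$ being comparable to $\min(d_0,\|k_\cdot\|_2^{-2})$ via \eqref{lev} and on the geometry of how zeros of $E$ accumulate, and getting the constants uniform (independent of $\Theta$) is exactly the subtlety that \eqref{lev} is designed to handle. Everything else — subharmonicity, the reproducing-kernel pointwise bounds in $K_\Theta$, and the passage from $L^2(\R)$ boundary values plus bounded type to $H^2$ membership — is standard once that geometric input is in place.
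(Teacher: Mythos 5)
The genuine gap is in the inclusion you call easy, $\HH(E)\subset\FF_W$. What must be proved there is precisely that $d\mu(z)=d_\vep(z)^{-1}\chi_{\Omega_\delta^c}(z)\,dm(z)$ (the factor $(1+\Im z)^{-2}$ being harmless) is a Carleson measure for the model space $K_\Theta$, and the mechanism you propose — a local mean-value bound plus bounded overlap of the discs $D(z,c\,d_\vep(z))$ — cannot deliver this. First, the area mean-value inequality carries the factor $d_\vep(z)^{-2}$, not $d_\vep(z)^{-1}$; with the correct power, substituting $|f(z)|^2\lesssim d_\vep(z)^{-2}\int_{D(z,c\,d_\vep(z))}|f|^2\,dm$ into the target integral and using $d_\vep(\zeta)\asymp d_\vep(z)$ on these discs merely reproduces $\int_{\Omega_\delta^c}|f|^2\,d_\vep^{-1}(1+\Im z)^{-2}dm$ up to constants: the argument is circular and gives no bound by $\|f\|_2^2$. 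If instead you use the global bound $|f(z)|^2\le\|f\|_2^2\|k_z\|_2^2\asymp\|f\|_2^2\,d_\vep(z)^{-1}$, you are left with $\int_{\Omega_\delta^c}d_\vep^{-2}(1+\Im z)^{-2}dm$, which diverges already for the Paley–Wiener space (there $d_\vep\asymp1$ on $\Omega_\delta^c$, a full horizontal strip). This embedding is the real content of the inclusion; the paper obtains it by verifying the hypothesis of the Treil–Volberg theorem \cite{vt} (it suffices that $\mu(S(I))\lesssim|I|$ for Carleson squares meeting $\Omega_\vep$), using a Whitney-type covering of $\R$ by intervals $I_n$ with $|I_n|\asymp{\rm dist}(I_n,\Omega_\delta)$, together with $d_\vep(z)\asymp|I_n|$ and $\Im z\lesssim d_\vep(z)$ on $S(I_n)\cap\Omega_\delta^c$ (the estimate \eqref{ert2}). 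Without this theorem, or some equivalent embedding result, your first inclusion does not close.

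For $\FF_W\subset\HH(E)$ your outline is close to the paper's, but two points need repair. You apply subharmonicity to $|F|^2$ and are then forced to prove $|E(\zeta)|\asymp|E(x)|$ on $D(x,c\,d_\vep(x))$, which you correctly flag but leave open; the paper avoids this comparison altogether by applying subharmonicity to $|F/E|^2$, legitimate because the zeros of $E$ lie at distance $\ge d_0(x)\gtrsim d_\vep(x)$ from $x$. Second, your bound $W(\zeta)\gtrsim|E(\zeta)|^{-1}r^{-1/2}$ is only immediate on the upper half of the disc: in $\C^-$ the weight is defined by reflection, $W(\zeta)=W(\bar\zeta)\asymp|E(\bar\zeta)|^{-1}d_\vep(\bar\zeta)^{-1/2}$, so one needs $|E(\zeta)|\ge\delta|E(\bar\zeta)|$ there; this holds because $|\Theta|\le1/\delta$ on the reflected part of the disc (which also guarantees that $E$ does not vanish there), provided the disc radius is chosen small enough that its reflection stays below a region where $|\Theta|\ge\delta$ — exactly what the paper encodes by requiring $S(2I_n)\subset\Omega_\delta^c$. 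With these fixes your covering and overlap argument, and the final upgrade from $L^2(\R)$ to $H^2$ via de Branges' Smirnov-class criterion (Lemma \ref{phi}, applied with the bound $|F/E|\lesssim1+(\Im z)^{-1}$ coming from the $W_0$ part of the weight), do match the paper's proof of this inclusion.
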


An important subclass of inner functions is formed by $\Theta$ such that the set
$\Omega_\vep$ is connected for some $\vep\in(0,1)$. They are usually 
called {\it one-component} inner functions. This class was introduced by 
W.~Cohn \cite{cohn} and studied extensively by A.~Aleksandrov (see, e.g., 
\cite{al1} and references therein). Some results about de Branges spaces whose $\Theta$ 
is one-component can be found in
\cite{MNO}. In particular this paper contains an observation
(communicated to authors of \cite{MNO} by the first author) 
that a meromorphic $\Theta$ is one-component if and only if 
the measure $\phi'(x) dx$ on $\R$ is locally doubling. 

It follows from the results of \cite{bar1} (and will be explained below) that for 
one-component inner function one has $d_\vep(z) \asymp \|k_z\|_2^{-2}$, 
$z\in \Omega_\delta^c$. Thus, we have the following corollary:

\begin{corollary}
\label{one1}
If $\Theta = E^\sharp/E$ is a one-component inner function, then 
for $0<\varepsilon<\delta<1$, we have $\HH(E)=\FF_W$
where $W(z) = W_0(z)\big(1+ \|k_z\|_2\, \chi_{\Omega_\delta^c} (z) \big)$, 
$z\in \C^+ \cup\R$, extended symmetrically to $\C^-$.
\end{corollary}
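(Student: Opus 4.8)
The plan is to read off the statement from Theorem~\ref{main} together with the equivalence $d_\varepsilon(z)\asymp\|k_z\|_2^{-2}$ on $\Omega_\delta^c$, valid for one-component $\Theta$, which was recorded just above the Corollary. By Theorem~\ref{main} we have $\HH(E)=\FF_{\widetilde W}$, where $\widetilde W$ is the weight \eqref{mw}; so it suffices to check that $\widetilde W$ and the weight $W(z)=W_0(z)\bigl(1+\|k_z\|_2\,\chi_{\Omega_\delta^c}(z)\bigr)$ on $\C^+\cup\R$, extended symmetrically to $\C^-$, are comparable at every point of $\C$, and then to invoke the elementary fact that two Fock-type weights $W_1\asymp W_2$ generate the same space with equivalent norms — which is immediate from $\|F\|_{\FF_{W_j}}^2=\int_\C|F W_j|^2\,dm$, or from the Closed Graph Theorem remark in the Introduction.

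For the comparison I would work on $\C^+\cup\R$, since both weights are then reflected to $\C^-$ in the same way. On $\Omega_\delta$ the characteristic functions vanish and both weights equal $W_0$, so there is nothing to prove there. On $\Omega_\delta^c$ I would use the one-component equivalence $d_\varepsilon(z)\asymp\|k_z\|_2^{-2}$, which gives $(d_\varepsilon(z))^{-1/2}\asymp\|k_z\|_2$, hence $1+(d_\varepsilon(z))^{-1/2}\asymp 1+\|k_z\|_2$ with absolute constants; multiplying by $W_0(z)$ yields $\widetilde W(z)\asymp W(z)$ there. Combining the two regions, $\widetilde W\asymp W$ on $\C^+\cup\R$, hence on $\C$, and therefore $\FF_W=\FF_{\widetilde W}=\HH(E)$.

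The only nontrivial ingredient is the one-component equivalence itself, and I would derive it from \eqref{lev}. The upper bound $d_\varepsilon(z)\le A_2\min(d_0(z),\|k_z\|_2^{-2})\le A_2\|k_z\|_2^{-2}$ on $\Omega_\delta^c$ is free. For the lower bound, \eqref{lev} reduces matters to the inequality $d_0(z)\gtrsim\|k_z\|_2^{-2}$ on $\Omega_\delta^c$: once this holds, the minimum in \eqref{lev} is comparable to $\|k_z\|_2^{-2}$, so $d_\varepsilon(z)\ge A_1\min(d_0(z),\|k_z\|_2^{-2})\gtrsim\|k_z\|_2^{-2}$. This last inequality is precisely where the one-component hypothesis enters and is the main obstacle: connectedness of a sublevel set $\Omega_{\varepsilon_0}$ forces $1-|\Theta(z)|^2$ to tend to $0$ at a definite power rate as ${\rm dist}(z,\sigma(\Theta))$ grows, so that a point of $\Omega_\delta^c$ cannot be much closer to $\sigma(\Theta)$ than $\|k_z\|_2^{-2}$. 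This is an Aleksandrov-type argument on the geometry of level sets (cf.~\cite{al1}, \cite{bar1}), which I would carry out in Section~\ref{level} alongside the proof sketch of \eqref{lev}; everything else in the Corollary is bookkeeping.
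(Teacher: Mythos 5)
Your reduction is exactly the paper's: Theorem \ref{main} plus the pointwise comparability $W_0\big(1+(d_\vep)^{-1/2}\chi_{\Omega_\delta^c}\big)\asymp W_0\big(1+\|k_z\|_2\chi_{\Omega_\delta^c}\big)$, which indeed follows from $d_\vep(z)\asymp\|k_z\|_2^{-2}$ on $\Omega_\delta^c$, and the trivial fact that comparable weights give the same Fock space. That part is fine. The problem is that the one ingredient you yourself call the main obstacle --- the lower bound $d_0(z)\gtrsim\|k_z\|_2^{-2}$ for $z\in\Omega_\delta^c$, equivalently $d_\vep(z)\gtrsim\|k_z\|_2^{-2}$ via \eqref{lev} --- is not proved but only gestured at. Saying that connectedness of a sublevel set ``forces $1-|\Theta(z)|^2$ to tend to $0$ at a definite power rate'' and promising ``an Aleksandrov-type argument'' is not an argument; deriving such quantitative decay directly from connectedness of $\Omega_{\vep_0}$ is essentially the content of Aleksandrov's theory and is nontrivial, so as written your proposal has a genuine gap precisely where the one-component hypothesis is used.

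The paper closes this gap (Corollary \ref{onecom}) not by geometric reasoning on the level set itself but by invoking Aleksandrov's characterization \cite[Theorem 1.2]{al1}: $\Theta$ is one-component if and only if $\|k_z\|_\infty\le A\|k_z\|_2^2$ for all $z\in\C^+$. Evaluating the kernel at a zero $z_n$ gives $|z_n-\overline z|^{-1}=2\pi|k_z(z_n)|\le 2\pi A\|k_z\|_2^2$, i.e.\ $|z_n-\overline z|\gtrsim\|k_z\|_2^{-2}$; combined with \eqref{esti bn} (which gives $|z-z_n|\ge\delta|z-\overline z_n|$ on $\Omega_\delta^c$) and the reductions \eqref{esti3 k_z}, this yields $d_0(z)\gtrsim\|k_z\|_2^{-2}$ on $\Omega_\delta^c$, and then \eqref{lev} gives the two-sided bound $d_\vep(z)\asymp\|k_z\|_2^{-2}$. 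If you replace your heuristic by this citation-plus-two-line computation (and note that the singular spectrum is handled by the reduction to Blaschke products already made in Theorem \ref{lev1}), your write-up becomes the paper's proof; without it, the corollary is not established.
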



\subsection{Weights depending on imaginary part}

We have seen that for the Paley--Wiener space $PW_a$,
one can choose a representing Fock 
weight $W_0(z) = (|\Im z|+1)^{-1}e^{-a|\Im z|}$ which depends only on $\Im z$. 
Are there other de Branges spaces with the same property? The answer turns out 
to be negative:

\begin{theorem}
\label{depy}
Let $\HH(E)$ be a de Branges space. Then the following \smallskip  are 
equivalent\textup:
\begin{enumerate}
	\item [(i)] There exists a representing weight $W$ for $\HH(E)$ 
such that $W(z) = W(\Im z)$, $z\in\C$\textup; 
\smallskip
	\item [(ii)] $\HH(E) = PW_a$ for some $a>0$.
\end{enumerate}
\end{theorem}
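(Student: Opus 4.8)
The implication (ii)$\Rightarrow$(i) is the computation preceding the statement: for $\HH(E)=PW_a$ the weight $W_0(z)=(1+|\Im z|)^{-1}e^{-a|\Im z|}$ is a representing weight, and it depends on $\Im z$ only. So the plan is to prove (i)$\Rightarrow$(ii). Two preliminary remarks. First, if $W(z)=W(\Im z)$ then $\|F\|_{\FF_W}^{2}=\int_{\R}W(y)^{2}\bigl(\int_{\R}|F(x+iy)|^{2}\,dx\bigr)dy$, so $\FF_W$, hence $\HH(E)$, contains no nonzero polynomial; in particular $\HH(E)$ is infinite-dimensional. Second — and this is the key reduction — the substitution $z\mapsto z-t$ shows that each real translation $T_t\colon F\mapsto F(\,\cdot\,+t)$ is an isometry of $\FF_W$, hence $\{T_t\}_{t\in\R}$ is a uniformly bounded group on $\HH(E)$. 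Since $\R$ is amenable one can unitarize: using a Banach limit in $R$, set $\|F\|_{\#}^{2}:=\operatorname{LIM}_{R}\frac{1}{2R}\int_{-R}^{R}\|T_tF\|_E^{2}\,dt$. This is a complete Hilbert norm equivalent to $\|\cdot\|_E$, every $T_t$ is $\|\cdot\|_{\#}$-isometric, and $(\HH(E),\|\cdot\|_{\#})$ \emph{still} satisfies the de Branges axioms: $F\mapsto F^\sharp$ is $\|\cdot\|_{\#}$-isometric because it commutes with each $T_t$, and $F\mapsto\frac{z-\overline w}{z-w}F$ (for $F(w)=0$) is $\|\cdot\|_{\#}$-isometric because $T_t$ turns it into the same map with $w-t$ in place of $w$. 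By the axiomatic characterization \cite[Theorem 23]{db}, $(\HH(E),\|\cdot\|_{\#})=\HH(\widetilde E)$ for some Hermite--Biehler $\widetilde E$, with $\HH(\widetilde E)=\HH(E)$ as sets. As the conclusion concerns only this set, I replace $E$ by $\widetilde E$: from now on $\HH(E)$ is infinite-dimensional and isometrically invariant under all real translations.

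Then $T_t$ is unitary, so $K_{w+t}(z+t)=K_w(z)$ for real $t$; writing this via $L(z,u):=K_{\overline u}(z)$, which is jointly holomorphic in $(z,u)$, and continuing analytically in $t$, one gets $K_w(z)=g(z-\overline w)$ with $g:=K_0$ entire. Inserting this into the de Branges formula for $K_w$ and putting $\zeta=\overline w$ gives the functional equation
\[
\tfrac{i}{2\pi}\bigl(E(z)E^\sharp(\zeta)-E^\sharp(z)E(\zeta)\bigr)=(z-\zeta)\,g(z-\zeta),\qquad z,\zeta\in\C .
\]
Applying $\partial_z+\partial_\zeta$ annihilates the right-hand side; dividing by $E(z)E(\zeta)$ and using $\Theta=E^\sharp/E$ yields
\[
\frac{E'(z)}{E(z)}+\frac{E'(\zeta)}{E(\zeta)}=-\,\frac{\Theta'(z)-\Theta'(\zeta)}{\Theta(z)-\Theta(\zeta)} .
\]
Clearing denominators and differentiating once in $z$ and once in $\zeta$ forces $\Theta''/\Theta'$ to be affine in $\Theta$, so $\Theta$ solves a constant-coefficient Riccati equation $\Theta'=\frac{\lambda}{2}\Theta^{2}+\kappa\Theta+\nu$; and the diagonal $\zeta\to z$ of the last identity gives $(E^{2}\Theta')'=0$, i.e.\ $E^{2}=c_1/\Theta'$. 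Linearizing the Riccati equation (or integrating it directly when $\lambda=0$) shows that $\Theta$ is a M\"obius function of $z$ or of $z\mapsto e^{i\gamma z}$ for some constant $\gamma$. Now $\Theta$ is inner in $\C^+$: a M\"obius function of $z$ that is inner is constant or a degree-one Blaschke factor, giving either a contradiction with $|E|>|E^\sharp|$ on $\C^+$ or $\dim\HH(E)=1$, both excluded; and a M\"obius function of $e^{i\gamma z}$ that is inner forces $\gamma\in\R$ and, after normalizing $\gamma>0$, the M\"obius part to be a disc automorphism. Hence
\[
\Theta(z)=e^{i\theta_0}\,\frac{e^{i\gamma z}-\alpha}{1-\overline\alpha\,e^{i\gamma z}},\qquad \gamma>0,\ |\alpha|<1 .
\]

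For this $\Theta$ one computes $\Theta'(z)=i\gamma(1-|\alpha|^{2})e^{i\theta_0}e^{i\gamma z}(1-\overline\alpha e^{i\gamma z})^{-2}$, and then $E^{2}=c_1/\Theta'$ gives $E(z)=c_2\bigl(e^{-i\gamma z/2}-\overline\alpha e^{i\gamma z/2}\bigr)$ (up to an irrelevant sign). This $E$ is of exponential type $\gamma/2$ along the imaginary axis and satisfies $|E(x)|\asymp1$ on $\R$; a routine argument with Krein's theorem on entire functions of bounded type in a half-plane then shows that the conditions $F/E,F^\sharp/E\in H^{2}(\C^+)$ hold exactly when $F$ is entire of exponential type at most $\gamma/2$ and $F\in L^{2}(\R)$. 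Thus $\HH(E)=PW_{\gamma/2}$ as sets, with $\gamma/2>0$, which is (ii). I expect the main obstacle to be the reduction in the first paragraph — verifying that the averaged norm is a genuine complete Hilbert norm and, above all, that it still obeys the de Branges axioms so that \cite[Theorem 23]{db} can be applied; after that, the functional equation, the Riccati analysis, and the identification of $PW_{\gamma/2}$ are essentially mechanical.
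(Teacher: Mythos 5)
Your argument is essentially correct, but it follows a genuinely different route from the paper. The paper never renorms: it only uses that the translations $U_t$ are uniformly bounded isomorphisms to get $\|K_{x+iy}\|_E\asymp\|K_{iy}\|_E$ (Lemma \ref{l2}), deduces via Phragm\'en--Lindel\"of that $E$ has finite exponential type (Lemma \ref{expo}), shows that elements of $\FF_\Phi$ are square integrable on horizontal lines, and then squeezes $PW_{b'}\subset\FF_\Phi\subset PW_a$ with $a$ the mean type of $E$, finishing with a Smirnov-class argument; this is elementary and identifies $a$ concretely. You instead unitarize the translation group by an invariant mean, check that the averaged norm still satisfies the de Branges axioms (the verifications you sketch are correct: $F\mapsto F^\sharp$ and the maps $F\mapsto\frac{z-\overline w}{z-w}F$ commute with real translations and are $\|\cdot\|_E$-isometric, so the averaged norm inherits the isometries), invoke \cite[Theorem 23]{db}, and then exploit translation-invariance of the kernel: $K_w(z)=g(z-\overline w)$ leads to the functional equation, the relation $u'\Theta'(\zeta)=u'(\zeta)\Theta'$ with $u=E'/E$, hence $u=\kappa\Theta+c_0$, the constant-coefficient Riccati equation and $E^2\Theta'=\mathrm{const}$, and finally $\widetilde E(z)=c_2(e^{-i\gamma z/2}-\overline\alpha e^{i\gamma z/2})$, whose space is $PW_{\gamma/2}$ since $|\widetilde E(x)|\asymp1$ and $e^{-i\gamma z/2}/\widetilde E$ is invertible in $H^\infty(\C^+)$. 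I checked these computations and they are sound. Notably, your argument uses the weight's dependence on $\Im z$ only through unitarity of translations on $\FF_W$ and the exclusion of the degenerate case, so, if written out carefully, it bears directly on the open question in the paper's final remarks (whether uniformly bounded translations force a Paley--Wiener space); that the authors state this as open is a reason to be extra scrupulous with the application of \cite[Theorem 23]{db} and with the averaging (you should either verify continuity of $t\mapsto\|U_tF\|_E^2$, e.g.\ via weak continuity plus constancy of the Fock norm, or use an invariant mean on all bounded functions on $\R$, which also keeps the parallelogram law).

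One genuine slip to fix: ``$\FF_W$ contains no nonzero polynomial, in particular $\HH(E)$ is infinite-dimensional'' is a non sequitur, since one-dimensional de Branges spaces need not consist of polynomials (for $E(z)=(z+i)e^{z^2}$ one gets the span of $e^{z^2}$). Fortunately you do not need infinite-dimensionality as such: in the M\"obius-in-$z$ case your own relation $E^2\Theta'=\mathrm{const}$ forces $\widetilde E$ to be a linear polynomial, so $\HH(E)$ would consist of the constants, and this is excluded precisely by your (correct) observation that no nonzero polynomial lies in $\FF_W$; alternatively, a one-dimensional translation-invariant $\FF_W$ would have to be spanned by an exponential, which never has finite $\FF_W$-norm. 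With that exclusion rephrased, and the routine verifications (Hilbertianity of the averaged norm, the Krein-type identification of $PW_{\gamma/2}$) filled in, your proof is complete.
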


Note that there exist functions $E$ such that $|E|$ is not equivalent to 1 on $\R$,
but $\HH(E) = PW_a$. We discuss such construction (from paper \cite{ls}) 
in Section \ref{th3}.
\medskip

We conclude with one question which we were not able to answer.
Note that the property (i) implies, in particular, that 
all real translations 
$U_t: F\mapsto F(\cdot+t)$, $t\in\mathbb{R}$, are isomorphisms of $\HH(E)$ onto itself
and $\sup_{t\in\R}\|U_t:\HH(E)\to\HH(E)\|<\infty$. 
It would be interesting to know whether the converse is true, 
that is, whether there exist de Branges spaces where
all real translations are isomorphisms with uniformly bounded norms 
which are different from the Paley--Wiener spaces.

\subsection{Organization of the paper}
In Section \ref{level} we prove some auxiliary estimates for the distances to the 
level sets. 
In Section \ref{th2} we prove the main result of the paper, Theorem \ref{main}.
In Section \ref{th1} we prove necessity of the condition 
$\phi'\in L^\infty$ in Theorem \ref{F Wb} (its sufficiency follows almost 
immediately from Theorem \ref{main}). In Section \ref{spect} we give another construction
of a representing weight for a general de Branges space which uses geometry of the 
spectral data for $\HH(E)$. Finally, in Section \ref{th3} we prove
Theorem \ref{depy} on the weights depending on the imaginary part only. 
\bigskip


\section{Distances to the level sets}
\label{level}

In this section we prove inequality \eqref{lev}. 

\begin{theorem}
\label{lev1}
Let $0<\varepsilon<\delta<1$. Then there exist constants $A_1, A_2>0$
depending on $\epsilon ,\delta$ such that for any inner function $\Theta$ in $\C^+$, 
\begin{equation}
   \label{lev2}
   A_1\min (d_0(z), \|k_z\|_2^{-2}) \le d_\varepsilon(z) \le
   A_2 \min (d_0(z), \|k_z\|_2^{-2}), \qquad z\in \Omega_\delta^c.
\end{equation}
\end{theorem}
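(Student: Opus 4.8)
The plan is to reduce everything to two standard facts about inner functions: the Schwarz--Pick type control of $|\Theta|$ and the relation between $1-|\Theta(z)|^2$, the reproducing kernel norm, and the hyperbolic/Euclidean geometry of the sublevel sets. First I would record the elementary identity $\|k_z\|_2^2 = \frac{1-|\Theta(z)|^2}{4\pi\Im z}$ for $z\in\C^+$ and the corresponding boundary formula, already quoted in the excerpt, so that on $\Omega_\delta^c$ (where $|\Theta(z)|\ge\delta$) we have $1-|\Theta(z)|^2 \asymp 1-|\Theta(z)| \asymp \|k_z\|_2^2\,\Im z$, with constants depending only on $\delta$. This lets me translate the quantity $\min(d_0(z),\|k_z\|_2^{-2})$ into something expressed purely in terms of $|\Theta|$ and the distance to $\sigma(\Theta)$.

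Next I would prove the \emph{upper} bound $d_\vep(z)\le A_2\min(d_0(z),\|k_z\|_2^{-2})$. The estimate $d_\vep(z)\le A_2 d_0(z)$ is almost immediate once one observes that near any point of $\sigma(\Theta)$ the function $\Theta$ takes values of modulus $<\vep$ (at zeros it vanishes; at the support of the singular measure $\liminf|\Theta|=0$), so $\Omega_\vep$ comes within a bounded multiple of $d_0(z)$ of $z$; a short normal-families or Harnack argument on the disc $D(z, c\,\Im z)$ makes the constant uniform in $\Theta$. For $d_\vep(z)\le A_2\|k_z\|_2^{-2}$ I would argue by contradiction: if the disc $D(z, A_2\|k_z\|_2^{-2})$ avoided $\Omega_\vep$, then $\log(1/|\Theta|)$ would be a positive harmonic function on that disc, and Harnack's inequality applied from the center to nearby points would force $1-|\Theta|$ to decay too slowly, contradicting $1-|\Theta(z)|^2\asymp \|k_z\|_2^2\,\Im z$ when the disc radius is comparable to or larger than $\Im z$ (the case of small $\Im z$ is handled by the same Harnack argument pushed down toward $\R$, using that $\|k_z\|_2^{-2}$ is then the relevant scale). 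This is essentially the argument of \cite[Theorem 4.9]{bar1} for $z=x\in\R$, carried over verbatim with $x$ replaced by $z$.

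The \emph{lower} bound $d_\vep(z)\ge A_1\min(d_0(z),\|k_z\|_2^{-2})$ is the main obstacle, as usual in these arguments. Here I would use the Schwarz--Pick inequality for the self-map $\Theta$ of the disc: for $z\in\Omega_\delta^c$ and any $w$ in the hyperbolic ball of radius $\rho=\rho(\vep,\delta)$ around $z$, one gets $|\Theta(w)|\ge\vep$, so that ball lies in $\Omega_\vep^c$; translating hyperbolic radius into Euclidean distance shows $d_\vep(z)\gtrsim \Im z \cdot \min\bigl(1,(1-|\Theta(z)|)\bigr)$ when $\Im z$ is itself at most a bounded multiple of $d_0(z)$, and then one checks that $\Im z\,(1-|\Theta(z)|)\asymp \Im z\cdot \|k_z\|_2^2\,\Im z / \ldots$ reduces to the desired $\min(d_0(z),\|k_z\|_2^{-2})$ after splitting into the regimes $\Im z\lesssim d_0(z)$ and $\Im z\gtrsim d_0(z)$. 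The delicate point is the boundary case $z=x\in\R$ (and $z$ near $\R$), where the Schwarz--Pick ball degenerates; there I would instead invoke the derivative estimate $|\Theta'(x)|=2\varphi'(x)$ together with a Herglotz-representation bound on how fast $|\Theta|$ can drop, to show $\Omega_\vep$ stays at distance $\gtrsim |\Theta'(x)|^{-1}=\|k_x\|_2^{-2}/\pi$ unless a zero (a point of $\sigma(\Theta)$) intervenes at distance $\lesssim d_0(x)$. Finally I would assemble the two bounds, noting throughout that every constant produced depends only on $\vep$ and $\delta$ because Schwarz--Pick, Harnack on a fixed hyperbolic ball, and the Herglotz representation are all conformally natural and hence independent of the particular inner function $\Theta$. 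I would then remark that the full proof, being a routine adaptation of \cite[Theorem 4.9]{bar1}, is only sketched.
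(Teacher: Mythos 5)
Your overall division into an upper and a lower estimate is right, and your reliance on \cite[Theorem 4.9]{bar1} for the real-line case is legitimate (the paper does the same for the upper bound), but the core estimate of the theorem is missing from your plan. Schwarz--Pick only yields $d_\vep(z)\gtrsim_{\vep,\delta}\Im z$, because a hyperbolic ball of fixed radius about $z$ has Euclidean radius $\asymp\Im z$; since $\|k_z\|_2^{-2}=4\pi\Im z/(1-|\Theta(z)|^2)\ge 4\pi\Im z$ for every $z\in\C^+$, and $d_0(z)$ may exceed $\Im z$ by an arbitrary factor, this can never reach the bound $d_\vep(z)\gtrsim\min\bigl(d_0(z),\|k_z\|_2^{-2}\bigr)$ except in the special regime $d_0(z)\lesssim\Im z$. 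In particular your intermediate claim that $d_\vep(z)\gtrsim \Im z\cdot\min\bigl(1,1-|\Theta(z)|\bigr)$ ``reduces to the desired min after splitting into regimes'' is false: for $z$ on or near $\R$ with the spectrum far away (precisely the case the paper needs later, for the intervals $I_n$ and Carleson squares) the left-hand side is negligible while the right-hand side is not. For that main case your plan delegates everything to an unspecified ``Herglotz-representation bound on how fast $|\Theta|$ can drop'', and that is exactly the statement which has to be proved; knowing $|\Theta'(x)|=2\phi'(x)$ at the single point $x$ does not control $|\Theta|$ on a whole disc of radius $\asymp\min\bigl(d_0(x),|\Theta'(x)|^{-1}\bigr)$. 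The paper's proof of the lower bound is an explicit computation: after reducing to Blaschke products via Frostman, it writes $\log|B(w)/B(z)|^2=\sum_n\log(1-Y_n)$ and shows $\sum_n|Y_n|\le C(\delta)\,|w-z|\sum_n y_n|z-\overline z_n|^{-2}\lesssim_\delta |w-z|\,\|k_z\|_2^2\le C\alpha$, using $|b_n(z)|\ge\delta$ for every $n$; this estimate (or an equivalent bound on the variation of $\log|\Theta|$ obtained from the canonical factorization) is the missing ingredient, and without it the lower bound is not established.

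A secondary issue: the Harnack contradiction you propose for $d_\vep(z)\lesssim\|k_z\|_2^{-2}$ does not work as described. The disc $D\bigl(z,A_2\|k_z\|_2^{-2}\bigr)$ is never contained in $\C^+$ (again because $\|k_z\|_2^{-2}\ge 4\pi\Im z$), and Harnack only transfers bounds from the centre, where $\log(1/|\Theta(z)|)\le\log(1/\delta)$ and may be arbitrarily small; it cannot force $\log(1/|\Theta|)$ above the fixed level $\log(1/\vep)$ anywhere, so no contradiction arises. The argument that does work is the one in \cite[Theorem 4.9]{bar1}, which the paper reproduces: when $\beta|B'(x)|^{-1}<d_0(x)/2$ one evaluates $\log|B|$ at the specific point $w=x+i\beta|B'(x)|^{-1}$ via the sum over the zeros and gets $\log|B(w)|^2<-\beta/2<2\log\vep$; together with the trivial bound $d_\vep\le d_0$ and the reductions $y\lesssim\min\bigl(d_0(z),\|k_z\|_2^{-2}\bigr)$, $\|k_z\|_2^2\asymp|B'(x)|$, $d_0(z)\asymp d_0(x)$ for $z=x+iy\in\Omega_\delta^c$, this yields the upper bound. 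So the upper half of your plan is reparable by the citation you already make, but the lower bound as proposed has a genuine gap.
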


\begin{proof}
Without loss of generality, we can assume that $\Theta$ is a Blaschke product. 
Indeed, by the Frostman theorem (see, e.g., \cite[Chapter II]{ga}), Blaschke products are 
uniformly dense in the set of all inner functions. So in what follows $\Theta=B$
is a Blaschke product with zeros $z_n=x_n+i y_n$, $n\in\N$, with $y_n>0$, 
and for each integer $n$, we set 
$$ 
b_n(z)=\frac{z-z_n}{z-\overline z_n},\qquad 
B_n=\prod^{n-1}_{j=1} b_j \qquad (B_0\equiv 1).
$$

Suppose that $z\in \Omega_{\delta}^c$ and 
$\left|w-z\right|<\alpha\min\left(d_{0}(z), \left\|k_z\right\|_2^{-2}\right)$. 
We will show that if $\alpha>0$ is small enough, then $\left|B(w)\right|>\epsilon$. 
This will prove the estimate from below in \eqref{lev2}.

In what follows the constants in estimates $\lesssim$ and $\asymp$
depend only on $\delta$ and $\vep$, but do not depend on $B$.
\medskip
\\
{\bf Estimate of} $d_\vep$ {\bf from below.}
Since $z\in \Omega_{\delta}^c$, we have                  
\begin{equation}\label{esti bn}
	\delta\leq \left|B(z)\right|\leq \left|B_{n+1}(z)\right|
        \leq\left|\frac{z-z_n}{z-\overline z_n}\right|,\qquad n\in\N.
\end{equation}
Using now the identities 
\begin{gather}
  \label{ert}
   2\pi\left\|k_z\right\|_2^{2}=\frac{1-\left|B(z)\right|^2}{\Im z}=\sum_{n}\left|B_{n}(z)\right|^2\frac{1-\left|b_n(z)\right|^2}{\Im z},\\
  \label{ert0}
   \frac{1-\left|b_n(z)\right|^2}{\Im z}=\frac{4 y_n}{\left|z-\overline z_n\right|^2},
\end{gather}
we get that
\begin{equation} 
  \label{esti1 k_z}
   \delta^2\sum_{n}\frac{4 y_n}{\left|z-\overline z_n\right|^2}\leq 
   2\pi\left\|k_z\right\|_2^2\leq\sum_{n}\frac{4 y_n}{\left|z-\overline z_n\right|^2}.
   \end{equation}
Recall that 
$$
|B'(x)| = \sum_{n}\frac{2 y_n}{|x-z_n|^2}, \qquad x\in \R
$$
(of course, in general $|B'(x)|$ need not be finite). It follows from
\eqref{esti bn} that 
\begin{equation}
  \label{ert1}
\delta|z-\overline z_n|\le |x-z_n| \le |z-\overline z_n|
\end{equation}
and so, by \eqref{esti1 k_z}, 
$$
   \delta^4\left|B'(x)\right|\leq 2\pi\left\|k_z\right\|_2^2, \qquad 
z=x+iy\in\Omega_{\delta}^c.
$$
Thus, for $z=x+iy\in\Omega_{\delta}^c$, we have 
\begin{equation}
  \label{esti3 k_z}
    \left\|k_z\right\|_2^2\asymp \left|B'(x)\right|\qquad \text{and}\qquad
    d_{0}(z) \asymp d_{0}(x).
\end{equation}

Now, let $w=u+iv$, $v>0$. We have 
$$
  \log\left|\frac{B(w)}{B(z)}\right|^2 =
  \sum_{n}\log\left[\frac{\left(u-x_n\right)^2+\left(v-y_n\right)^2}
   {\left(u-x_n\right)^2+\left(v+y_n\right)^2} \cdot \frac{\left(x-x_n\right)^2+\left(y+y_n\right)^2}{\left(x-x_n\right)^2+\left(y-y_n\right)^2}\right]
  =\sum_{n} \log\left[1-Y_n\right]
$$
where 
$$
Y_n = \frac{4vy_n\left(x-x_n\right)^2-4yy_n\left(u-x_n\right)^2}
  {\left|z-z_n\right|^{2}\left|w-\overline z_n\right|^{2}}-\frac{\left(y-y_n\right)^2
  \left(v+y_n\right)^2 +
\left(y+y_n\right)^2\left(v-y_n\right)^2}{\left|z-z_n\right|^{2}\left|w-\overline z_n\right|^{2}}.
$$
Since $\left|w-z\right|<\alpha d_0(z)$, we observe that
\begin{equation}\label{esti w-zn bar}
\left(1-\alpha\right)\left|z-\overline z_n\right|\leq \left|w-\overline z_n\right|\leq 
\left(1+\alpha\right)\left|z-\overline z_n\right|.
\end{equation}
Using (\ref{esti bn}) and (\ref{esti w-zn bar}), it is easily seen that
\begin{align*}
\sum_n\left|Y_n\right|&\leq C(\delta)\sum_n\frac{\left|w-z\right|y_n}{\left|z-\overline z_n\right|^2}\\
&\leq \tilde C(\delta)\left|w-z\right|\left\|k_z\right\|_2^2\leq \tilde C(\delta)\alpha.
\end{align*}
Hence, if $\alpha$ is sufficiently small, we conclude that
$$ 
\log\left|\frac{B(w)}{B(z)}\right|^2> 2 \log\frac{\epsilon}{\delta},
$$
whence $\left|B(w)\right|>\epsilon.$
Thus, there exists a positive constant $C=C(\delta,\epsilon)$ such that
$$ 
d_{\epsilon}(z)\geq C\min\left(d_{0}(z),\left\|k_z\right\|_2^{-2}\right). 
$$
\smallskip
\\
{\bf Estimate of} $d_\vep$ {\bf from above.}
Let us prove the converse inequality. Let $z=x+iy\in\Omega_{\delta}^c$. Inequality
\eqref{esti bn} implies that $|z-z_n| \ge \delta (2y -|z-z_n|)$ whence 
$y\lesssim d_{0}(z)$.
Also, $y = (2\pi\|k_z\|_2^2)^{-1} (1-|B(z)|^2) \lesssim \|k_z\|_2^{-2}$. Hence,
\begin{equation}
  \label{ert2}
  y\lesssim \min\left(d_{0}(z),\left\|k_z\right\|_2^{-2}\right).
\end{equation}
In view of \eqref{esti3 k_z}, it remains to show that
$$ 
  d_{\epsilon}(x) \lesssim \min\left(d_{0}(x),\left|B'(x)\right|^{-1}\right) 
  \asymp \min\left(d_{0}(z),\left\|k_z\right\|_2^{-2}\right).
$$
The latter estimate is proved in \cite[Theorem 4.9]{bar1}. We 
include its proof to make the exposition self-contained. 

Put  $\beta=(1+4\log\frac{1}{\varepsilon})$.
Clearly, $d_0(x) \ge d_\varepsilon(x)$. Thus,
if $2\beta|B'(x)|^{-1}\ge  d_0(x)$,
then $d_\varepsilon(x) \le 2 \beta \min\left(d_{0}(x), |B'(x)|^{-1}\right)$. 
Assume that $2\beta|B'(x)|^{-1}< d_0(x)$ and put $w=x+i\beta|B'(x)|^{-1}$. 
Then $|x-w|<d_0(x)/2$ and so 
$$
|x-z_n|/2 < |w-\overline z_n|< 2|x-z_n|
$$ 
for any $n$. We have also
$$
\begin{aligned}
\log|B(w)|^2 & = \sum_n \log |b_n(w)|^2 = 
\sum_n \log \bigg(1- \frac{4y_n\Im w}{|w-\overline z_n|^2}\bigg) \\
& < -\sum_n \frac{4\delta|B'(x)|^{-1}y_n}{|w-\overline z_n|^2} \le
-\sum_n \frac{\beta|B'(x)|^{-1}y_n}{|x-\overline z_n|^2}
= -\beta/2<2\log\varepsilon.
\end{aligned}
$$
Hence, $|B(w)|<\varepsilon$, and so 
$d_\varepsilon(x)<|x-w|=\beta |B'(x)|^{-1}$.
\end{proof}

In the case when $\Theta$ is a one-component inner function, we have 
a simpler formula:

\begin{corollary} 
\label{onecom}
Let $\Theta$ be a one-component inner function in $\C^+$ and let $0<\vep<\delta<1$. 
Then 
$$
d_\vep(z) \asymp \|k_z\|_2^{-2} \asymp |\Theta'(\Re z)|^{-1}, \qquad z\in \Omega_\delta^c,
$$
where the constants in the asymptotic equality depend on $\Theta$, $\vep$ and 
$\delta$, but not on $z\in \Omega_\delta^c$.
\end{corollary}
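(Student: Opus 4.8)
The corollary asserts two asymptotic equalities on $\Omega_\delta^c$: $\|k_z\|_2^{-2}\asymp|\Theta'(\Re z)|^{-1}$ and $d_\vep(z)\asymp\|k_z\|_2^{-2}$. The first of these has nothing to do with one-componentness: it is exactly the relation \eqref{esti3 k_z} obtained in the course of the proof of Theorem \ref{lev1} (there for $\Theta$ a Blaschke product, to which the general case reduces by Frostman's theorem with $\Theta$-independent constants), together with the fact, recorded in the same place, that $d_0(z)\asymp d_0(\Re z)$ for $z\in\Omega_\delta^c$. So in the proof I would simply invoke \eqref{esti3 k_z} for that half and concentrate on the second equality.

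For $d_\vep(z)\asymp\|k_z\|_2^{-2}$ the point of departure is Theorem \ref{lev1}, which gives $d_\vep(z)\asymp\min\bigl(d_0(z),\|k_z\|_2^{-2}\bigr)$ on $\Omega_\delta^c$; hence the upper bound $d_\vep(z)\lesssim\|k_z\|_2^{-2}$ is automatic and only the lower bound needs work. By the same comparison the lower bound is equivalent to
$$
\|k_z\|_2^{-2}\lesssim d_0(z),\qquad z\in\Omega_\delta^c,
$$
after which \eqref{lev2} yields $d_\vep(z)\asymp\min\bigl(d_0(z),\|k_z\|_2^{-2}\bigr)\asymp\|k_z\|_2^{-2}$. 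Using $\|k_z\|_2^2\asymp|\Theta'(\Re z)|$ and $d_0(z)\asymp d_0(\Re z)$ once more, this is in turn the same as the one-dimensional inequality $d_0(x)\,|\Theta'(x)|\gtrsim1$ for $x\in\R$.

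This last step is the only place where the one-component hypothesis enters, and it is the part I expect to be hardest: for a general inner function $\Theta$ the point $z$ may lie far closer to $\sigma(\Theta)$ than the natural scale $\|k_z\|_2^{-2}$, so the inequality genuinely fails without some extra hypothesis. The plan is to deduce $d_0(z)\gtrsim\|k_z\|_2^{-2}$ from the geometry of the sublevel sets of one-component inner functions as developed in \cite{bar1}. Concretely, fix $\vep_1$ for which $\Omega_{\vep_1}$ is connected; since the nearest point $\zeta\in\sigma(\Theta)$ to $z$ lies in $\overline{\Omega_{\vep_1}}$, the set $\Omega_{\vep_1}$ enters every neighbourhood of $\zeta$, hence comes within distance $\asymp d_0(z)$ of $z$, while by the first part of the proof of Theorem \ref{lev1} the disc $D\bigl(z,c\min(d_0(z),\|k_z\|_2^{-2})\bigr)$ avoids $\Omega_\vep$ for a suitable constant $c$; connectedness then restricts how $|\Theta|$ can recover between these two regions, and unwinding this forces $d_0(z)\gtrsim\|k_z\|_2^{-2}$. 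Since this quantitative comparison is precisely what is established in \cite{bar1}, in the actual write-up I would quote it from there, so that the proof of the corollary consists of that citation together with \eqref{esti3 k_z} and Theorem \ref{lev1}.
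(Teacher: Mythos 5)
Your reduction coincides with the paper's: \eqref{esti3 k_z} gives $\|k_z\|_2^2\asymp|\Theta'(\Re z)|$ and $d_0(z)\asymp d_0(\Re z)$ on $\Omega_\delta^c$, Theorem \ref{lev1} gives $d_\vep(z)\asymp\min\bigl(d_0(z),\|k_z\|_2^{-2}\bigr)$, and everything boils down to the single inequality $d_0(z)\gtrsim\|k_z\|_2^{-2}$, which is indeed the only point where one-componentness is used. The gap is that you do not actually prove this inequality. The connectedness sketch does not close: comparing the two regions only yields $d_0(z)\ge d_{\vep_1}(z)\gtrsim\min\bigl(d_0(z),\|k_z\|_2^{-2}\bigr)$, which is vacuous when the minimum equals $d_0(z)$ and coincides with the desired conclusion only when the minimum is already known to be $\|k_z\|_2^{-2}$; so as written the argument is circular, and the step where connectedness is supposed to "restrict how $|\Theta|$ can recover" is exactly the missing quantitative input, never supplied. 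The fallback of quoting \cite{bar1} amounts to citing essentially the statement being proved (the paper's remark before the corollary already attributes it to \cite{bar1} but then supplies a proof), so it cannot carry the write-up by itself.

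The paper's actual mechanism is Aleksandrov's characterization \cite[Theorem 1.2]{al1}: $\Theta$ is one-component if and only if there is $A>0$ with $\|k_z\|_\infty\le A\|k_z\|_2^2$ for all $z\in\C^+$. Evaluating $k_z$ at a zero $z_n$ (where $\Theta(z_n)=0$) gives $|z_n-\overline z|^{-1}=2\pi|k_z(z_n)|\le2\pi A\|k_z\|_2^2$; taking $z=x\in\R$, where $|z_n-\overline x|=|x-z_n|$, this yields $|x-z_n|\gtrsim|\Theta'(x)|^{-1}$, hence $d_0(x)\gtrsim|\Theta'(x)|^{-1}$, and then Theorem \ref{lev1} together with \eqref{esti3 k_z} (which transfers the estimates from $x=\Re z$ to $z\in\Omega_\delta^c$) completes the proof exactly along the lines of your reduction. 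So the missing idea is this $L^\infty$--$L^2$ comparison for reproducing kernels of one-component inner functions; with it, your outline becomes a complete two-line argument, whereas without it (or an honest extraction of the corresponding estimate from \cite{bar1}) the crucial step remains unproved.
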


\begin{proof} 
By a result of Aleksandrov \cite[Theorem 1.2]{al1}, $\Theta$ is one-component
if and only if there exists a constant $A>0$
such that $\|k_z\|_\infty \le A \|k_z\|_2^2$, $z\in \C^+$.  
Note that this estimate implies that $k_x \in L^2(\R)$ for $x\in \R$
only if $x$ is separated from the spectrum  of $\Theta$.
Then
$$
|z_n- \overline{z}|^{-1} = 2\pi |k_z(z_n)| \le 2\pi A \|k_z\|_2^2,
$$
whence $|z_n-\overline z| \gtrsim \|k_z\|_2^{-2}$, $z\in \C^+$.
In particular, $|x-z_n| \gtrsim |\Theta'(x)|$, $x\in \R$,
and so $d_0(x) \gtrsim |\Theta'(x)|$. It follows that $d_\vep(x) \asymp |\Theta'(x)|$, 
$x\in \R$.

Now let $z = x+iy \in\Omega_\delta^c$. Then we know from \eqref{esti3 k_z}
that $\|k_z\|_2^2 \asymp |\Theta'(x)|$ and $d_\vep(z) \asymp d_\vep(x)$. 
This completes the proof.
\end{proof}
\bigskip


\section{Proof of Theorem \ref{main}}
\label{th2}

In what follows it will be convenient to use the following 
equivalent description of the Hardy space $H^2$. We will say that $f$ is a 
{\it function of bounded type} in the upper half-plane if $f$ is analytic in $\C^+$
and $f=g/h$ where $g, h \in H^\infty$ (here $H^\infty$ is the space of 
all bounded analytic functions in $\C^+$). If, moreover, $h$ is outer 
(see, e.g., \cite[Chapter II]{ga})  then we say that $f$ belongs to the Smirnov class
$\mathcal{N}_+$. A classical theorem of V.I. Smirnov says that $f\in H^2$ if and only 
if $f\in \mathcal{N}_+$ and $f\in L^2(\R)$ in the sense of nontangential boundary values. 

If $f$ is analytic in $\C^+$ and continuous in $\C^+\cup \R$, then 
$f\in \mathcal{N}_+$
if and only if  $f$ is of bounded type and 
$$
\limsup\limits_{y\to+\infty}\frac{\log|f(iy)|}{y} \le 0
$$
($f$ is of nonpositive {\it mean type}).

In the proof of Theorem \ref{main} we will need the following

\begin{lemma}
\label{phi}
Let $f$ be analytic in $\C^+$ and continuous on $\C^+\cup \R$. 
Assume that there exists a nonincreasing function 
$\psi:(0,+\infty)\rightarrow(0,+\infty)$ such that,
for some $r>0$,
$$ 
\int^{r}_{0}y\left|\log \psi(y)\right|dy<\infty
$$
and that $\left|f(z)\right|\leq\psi(y)$, $z=x+iy \in \C^+$. If, moreover, 
$f\in L^2(\R)$, then $f\in H^2$.
\end{lemma}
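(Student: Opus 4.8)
The plan is to show that $f$ belongs to the Smirnov class $\mathcal{N}_+$; once this is done, the classical theorem of Smirnov quoted in the text, together with the hypothesis $f\in L^2(\R)$, gives $f\in H^2$ immediately. Since $f$ is analytic in $\C^+$ and continuous on $\C^+\cup\R$, by the characterization recalled above it suffices to prove two things: that $f$ is of bounded type in $\C^+$, and that $f$ has nonpositive mean type, i.e.\ $\limsup_{y\to+\infty} y^{-1}\log|f(iy)|\le 0$. The mean type condition is essentially free: the bound $|f(x+iy)|\le\psi(y)$ with $\psi$ positive and nonincreasing gives $|f(iy)|\le\psi(y)\le\psi(1)$ for $y\ge 1$, so $\log|f(iy)|$ is bounded above for large $y$ and the $\limsup$ is $\le 0$ (in fact $f$ is bounded on $\{\Im z\ge 1\}$).

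The substantive point is to verify that $f$ is of bounded type, and here the hypothesis $\int_0^r y\,|\log\psi(y)|\,dy<\infty$ is exactly what is needed. The natural route is to work on a horizontal strip or, more conveniently, to conformally map a disc tangent to $\R$ into $\C^+$ and invoke a Blaschke-type / Jensen-type criterion. Concretely, I would fix a small half-disc $D=\{z: |z-x_0|<\rho,\ \Im z>0\}$ — or better, the full disc $D(x_0+i\rho,\rho)\subset\C^+$ for $\rho<r$ — and study $f$ there. The key is that on such a disc the distance to the boundary point of tangency behaves like a multiple of $\Im z$, so the growth bound $|f(z)|\le\psi(\Im z)$ translates into a bound in terms of the distance to the boundary, and the integral condition $\int_0^r y|\log\psi(y)|\,dy<\infty$ becomes a condition on $\log|f|$ against the boundary (an Ahlfors–Heins / log-integrability condition) which is precisely the Smirnov-class criterion: an analytic function on a Jordan domain with rectifiable boundary, continuous up to the boundary except possibly at one point near which $\log|f|$ is controlled by a Carleson-type integrable majorant, lies in the Smirnov class. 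Alternatively one can apply the Phragmén–Lindelöf / Levinson-type argument directly: the condition $\int_0^{r} y|\log\psi(y)|\,dy<\infty$ is the classical Levinson condition guaranteeing that a function bounded by $\psi(y)$ near $\R$ cannot grow too fast, forcing bounded type.

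The main obstacle will be handling the behavior of $f$ as $z$ approaches $\R$: a priori $f$ need only be continuous, hence locally bounded, on $\C^+\cup\R$, but to get a \emph{global} bounded-type representation $f=g/h$ with $g,h\in H^\infty$ one must produce the outer function $h$ whose modulus compensates the decay of $\psi$ near $0$. I would construct $h$ as the outer function with boundary modulus $\min(1,\psi(\mathrm{dist}(\,\cdot\,,\text{bad set})))$ — or more simply, since $f$ is continuous up to $\R$ and bounded on $\{\Im z\ge 1\}$, reduce to a single strip $0<\Im z<1$ where the only possible unboundedness is as $\Im z\to 0$, and there the bound $|f|\le\psi(\Im z)$ with $\int_0^1 y|\log\psi(y)|\,dy<\infty$ lets one write $f=(f\cdot h)/h$ with $h$ outer, $|h(x+iy)|\asymp \psi(y)\wedge 1$, so that $fh\in H^\infty$ on the strip. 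Pasting the strip estimate to the trivial estimate on $\{\Im z\ge 1\}$ yields that $f$ is of bounded type in all of $\C^+$, and combined with the mean-type bound this gives $f\in\mathcal{N}_+$, completing the proof.
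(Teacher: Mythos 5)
Your reduction is the same first step as the paper's (pass to the Smirnov class $\mathcal{N}_+$ and invoke Smirnov's theorem), and the mean-type estimate is fine; but the substantive step -- that $f$ is of bounded type in $\C^+$ -- is asserted rather than proved, and the routes you sketch cannot be made to work. (i) The outer function $h$ with $|h(x+iy)|\asymp \min(1,1/\psi(y))$ (this is what you must have meant; with $|h|\asymp\psi(y)\wedge 1$ as written, $fh$ would not even be bounded) does not exist once $\psi$ is unbounded near $0$: if $|h|\le 1$ on the strip and $|h(x+iy)|\le 1/\psi(y)$ uniformly in $x$, then comparing $\log|h|$ at a fixed interior point $z_0$ with its harmonic-measure average over a rectangle whose bottom edge sits at height $y$ gives $\log|h(z_0)|\le -c\log\psi(y)\to-\infty$, i.e.\ $h\equiv 0$ (if moreover $|h|$ depends only on $\Im z$ and $h$ is zero-free, $\log|h|$ would be affine in $y$, again incompatible). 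Equivalently, $\log^+\psi(\Im z)$ has no positive harmonic majorant in $\C^+$, so no argument using only the pointwise bound $|f(z)|\le\psi(\Im z)$ can yield the global representation $f=g/h$ with $g,h\in H^\infty(\C^+)$; the $L^2(\R)$ data and a genuine growth estimate at infinity must enter. (ii) The local routes (disc tangent to $\R$, Smirnov-domain criteria near a boundary point) are vacuous here: $f$ is continuous on $\C^+\cup\R$, hence bounded on any such disc, so local Smirnov membership is automatic and gives nothing global. (iii) The hypothesis $\int_0^r y|\log\psi(y)|\,dy<\infty$ is not the classical Levinson (log--log) condition, and Levinson/Domar-type majorant theorems require analyticity in a full neighborhood of the real segment, which $f$ does not have. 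The real difficulty, which your sketch never addresses, is the joint regime $|\Re z|\to\infty$, $\Im z\to 0$: continuity controls compact pieces of $\R$, the bound $\psi$ controls $\{\Im z\ge\epsilon\}$, and the corner at infinity is precisely where the integral hypothesis has to be used.

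For comparison, the paper does not prove bounded type directly; it applies de Branges' criterion (\cite[Theorem 11]{db}) for membership in $\mathcal{N}_+$ for functions analytic in $\C^+$ and continuous up to $\R$: (a) nonpositive growth along $i\R$, (b) $\int_\R \log^+|f(t)|(1+t^2)^{-1}dt<\infty$, and (c) $\lim_{R\to\infty}R^{-2}\int_0^\pi \log^+|f(Re^{i\theta})|\sin\theta\,d\theta=0$. Conditions (a) and (b) are immediate from the hypotheses, and (c) -- the growth condition at infinity -- is exactly where $\int_0^r y|\log\psi(y)|\,dy<\infty$ is used: the part of the semicircle with $\Im(Re^{i\theta})\ge 1$ contributes $O(R)$ since $f$ is bounded there, while on the part near $\R$ the substitution $s=R\sin\theta$ bounds the contribution by a constant times $R^{-1}\int_0^1 s\log^+\psi(s)\,ds$. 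Any repair of your argument would have to establish an estimate of this kind; as it stands, the key step is missing.
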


This lemma is a consequence of a remarkable result of L.~de~Branges \cite[Theorem 11]{db}
which gives sufficient condition for the inclusion to the Smirnov class $\mathcal{N}_+$.
We will need Lemma \eqref{phi} only for $\psi(y)=1/y$, but prefer 
to give a proof in full generality (note that the critical growth is 
$\psi(y) = \exp(1/y^2)$).

\begin{proof}
By Smirnov's theorem, it is sufficient to show that $f$ is in the Smirnov class 
$\mathcal{N}_+$. By de Branges' theorem  \cite[Theorem 11]{db}, 
a function $f$ (analytic in $\C^+$ and continuous in $\C^+\cup\R$)
is in  $\mathcal N_+$ if the following conditions hold:
\begin{description}
	\item[(a)] $\lim_{y\to \infty}\dfrac{\log\left|f(iy)\right|}{y}\leq 0$;
	\item[(b)] $\int_{\R}\dfrac{\log^+\left|f(t)\right|}{1+t^2}dt<\infty$;
\medskip
	\item[(c)] $\lim_{R\to \infty} \frac{1}{R^2} \int^{\pi}_{0}\log^+\left|f(R e^{i\theta})\right|\sin \theta\, d\theta=0. $
\end{description}             
\smallskip
Thus, apart from obvious necessary conditions (a) and (b) one needs to satisfy 
a very mild growth restriction (c).

In our case conditions (a) and (b) are immediate. Condition (c) follows  
from the inequality $\left|f(z)\right|\leq\psi(y)$, for $z=x+iy$. Indeed,
$$ 
\int^{\pi - \text{arcsin}\frac{1}{R}}_{\text{arcsin}\frac{1}{R}}
\log^+\left|f(R e^{i\theta})\right|d \theta\lesssim R,
$$
since in this integral $\Im(R e^{i\theta})\ge 1$ and $f$ 
is bounded in $\{\Im z\ge 1\}$, while
$$ 
\int^{\text{arcsin}\frac{1}{R}}_{0}\log^+\left|f(R e^{i\theta})\right |
\sin \theta d \theta\lesssim \frac{1}{R}\int^{1}_{0} s \log^+\psi(s)ds
$$
(in the last integral, we have used the change of variable $s=R\sin \theta$). 
Thus $f\in H^2$.
\end{proof}

\begin{proof}[Proof of Theorem \ref{main}]
For the proof, it will be useful to consider a special system of intervals and Carleson squares. We may cover the real line as $\R=\cup_{n}I_n$, where $(I_n)_n$ are intervals with mutually disjoint interiors such that 
$$ 
\left|I_n \right| \asymp\text{dist}\left(I_n, \Omega_{\delta}\right)\quad 
\text{ and } \quad S(I_n)\subset \Omega_{\delta}^c,
$$
where $S(I)=I\times [0,\left|I\right|]$
is the Carleson square. Note that $\left|I_n \right|\asymp\left|I_{n+1} \right|$ 
for two contiguous intervals. We also have 
\begin{gather*}
   d_{\delta}(x)\asymp d_{\delta}(y),\qquad x,\ y\in I_n,\\
   d_{\delta}(x)\asymp d_{\epsilon}(x), \qquad x\in\R,
\end{gather*}
where the involved constants depend on $\epsilon$ and $\delta$ only. Then 
it follows from \eqref{esti3 k_z} that
$$
d_{\epsilon}(z)\asymp\left|I_n \right|, \qquad z\in S(I_n).
$$

We shall consider a modified weight, 
\begin{equation}\label{modified W}
	\widetilde W(z)= W_0(z) \left(1+\sum_{n}\frac{1}{ \left|I_n \right|^{1/2}}\chi_{S(I_n)
	}(z)\right), \qquad z\in \C^+\cup \R,
\end{equation}
and $\widetilde W(z)=\widetilde W(\overline z),\ z\in \C^-$.
It is clear that $\widetilde W \lesssim W$ and so $\FF_W \subset 
\FF_{\widetilde W}$. \\

{\bf Inclusion} $\FF_{\widetilde W}\subset \HH(E)$. 
We need to show that $F/E\in L^2(\R)$, and also that 
$F/E, F^{\sharp}/E$ are in $H^2(\C^+)$. Note that by 
dividing the intervals $I_n$ in smaller parts, we can achieve 
that $S(2I_n)\subset \Omega_{\delta}^c$, where by $\gamma I_n$ we always denote 
the interval of length $\gamma \left|I_n\right|$ with the same center as $I_n$.

Let $\alpha\in\left(0,\frac{1}{2}\right)$. 
For $x\in I_n$, put $D(x)=D(x,\alpha\left|I_n\right|)$ if $\left|I_n\right|\leq 1$, 
and $D(x)=D(x,\alpha)$ otherwise. 

Note that $\left|\Theta(z)\right|\geq\delta$, 
whenever $z\in S(2I_n)$. Since $\Theta(z)=\frac{1}{\overline{\Theta(\overline z)}}$ 
for $z\in\C^{-}$, we have $\left|\Theta(z)\right|\leq\frac{1}{\delta}$ when 
$z\in D(x)\cap \C^{-}$, since $D(x)\subset S(2I_n)\cup \overline{S(2I_n)}$
(where, for a set $A\subset \mathbb{C}$ we put 
$\overline{A} = \{\overline z: z\in A\}$). 
In particular, 
$\left|E(z)\right|\geq\delta\left|E(\overline z)\right|$, for $z\in D(x)\cap \C^{-}$.

We first assume that $\left|I_n\right|\leq 1$. Using subharmonicity, we have 
$$  
\left|\frac{F(x)}{E(x)}\right|^{2}\leq\frac{1}{\pi\alpha^2\left|I_n\right|^2}\int_{D(x)}\left|\frac{F(z)}{E(z)}\right|^{2}dm(z).
$$
Integrating over $I_n$, we get
\begin{align*}
\int_{I_n}\left|\frac{F(x)}{E(x)}\right|^{2}dx&\leq \frac{1}{\pi\alpha^2\left|I_n\right|^2}\int_{I_n}\left(\int_{D(x)}\left|\frac{F(z)}{E(z)}\right|^{2}dm(z)\right)dx\\
&\lesssim \frac{1}{\left|I_n\right|}\int_{\left(\alpha+1\right)I_n\times [-\alpha\left|I_n\right|, \alpha\left|I_n\right|]}\left|\frac{F(z)}{E(z)}\right|^{2}dm(z)\\
&\lesssim \int_{\left(\alpha+1\right)I_n\times [-\alpha\left|I_n\right|, \alpha\left|I_n\right|]}\left|F(z)\widetilde W(z)\right|^2dm(z),
\end{align*}
since $\left|E(z)\right|\geq\delta\left|E(\overline z)\right|$ 
for $z\in \C^{-}$ and $\widetilde W(z)\asymp \frac{1}{\left|E(z)\right|}
\frac{1}{\left|I_n\right|^{1/2}}$ in $S(2I_n) \cup \overline{S(2I_n)}$.

In the case when $\left|I_n\right|>1$, we have
$$  
\left| \frac{F(x)}{E(x)}\right|^{2} \leq\frac{1}{\pi\alpha^2}
\int_{D(x)}\left|\frac{F(z)}{E(z)}\right|^{2}dm(z)
$$
whence, integrating over $x\in I_n$, we get
\begin{align*}
\int_{I_n}\left|\frac{F(x)}{E(x)}\right|^{2}dx \lesssim
\int_{\left(\alpha+1\right)I_n\times [-\alpha, 
\alpha]}\left|F(z)\widetilde W(z)\right|^2dm(z),
\end{align*}
since now $\widetilde W\asymp \frac{1}{|E|}$ in 
$(\alpha+1)I_n\times [-\alpha, \alpha]$.

Recall that $|I_n| \asymp |I_{n+1}|$ for two contiguous intervals. 
Therefore the sets $\left(\alpha+1\right)I_n\times [-\alpha |I_n|, 
\alpha |I_n|]$ for $|I_n|\le 1$ (or $(\alpha+1)I_n\times [-\alpha, 
\alpha]$ for $|I_n|>1$) 
may intersect but with finite multiplicity, which implies that
$$  
\sum_{n}\int_{I_n}\left|\frac{F(x)}{E(x)}\right|^{2}dx\lesssim \int_{\C}\left|F\widetilde W\right|^2 dm,$$
and so
$$ 
\int_{\R}\left|\frac{F(x)}{E(x)}\right|^{2}dx\lesssim \int_{\C}\left|F\widetilde W\right|^2 dm.
$$

To show that $F/E$ is in $H^2(\C^+)$, we can use Lemma \ref{phi}
and the fact that $\widetilde W \ge W_0$.  
For $z\in\C^+$, we obtain
\begin{align*}
\left|\frac{F(z)}{E(z)}\right|^{2}&\leq\frac{1}{\pi\left(\Im z\right)^2}
\int_{D(z,\Im z/2)}\left|\frac{F}{E}\left(\zeta\right)\right|^2dm\left(\zeta\right)\\
&\lesssim\frac{(\Im z+1)^2}{\left(\Im z\right)^2}\int_{D(z,\Im z/2)}\left|FW_0\right|^2dm
\le \frac{(\Im z+1)^2}{\left(\Im z\right)^2}\left\|F\right\|^{2}_{\FF_{\widetilde W}}.
\end{align*}
By Lemma \ref{phi} applied to $\phi(y) = (y+1)/y$, 
we have $F/E\in H^2$; the reasoning for  $F^{\sharp}$ is analogous.\\

{\bf Inclusion} $\HH(E)\subset \FF_W$. We have already seen that if $F/E\in H^2$, then
$$ 
\int_{\C^+}\left|\frac{F(z)}{E(z)} \right|^{2}\frac{1}{\left(1+\Im z\right)^2}dm(z)<\infty.
$$
We need to show that
$$
\int_{ \Omega_{\delta}^c }\left|\frac{F(z)}{E(z)}\right|^{2}
\frac{1}{d_{\epsilon}(z)}dm(z)\lesssim \left\|F\right\|^{2}_{\HH(E)},
$$
i.e., that 
$$
d\mu(z):=\frac{1}{d_{\epsilon}( z)}\chi_{\Omega_{\delta}^c}(z)dm(z)
$$
is a Carleson measure for the model space $K_{\Theta}$.

As before, consider the system of intervals $I_n$ such that $S(2I_n)\subset \Omega_{\delta}^c$ and 
$$\left|I_n\right|\asymp  \text{dist}\left(I_n, \Omega_{\delta}\right)\asymp  \text{dist}\left(I_n, \Omega_{\epsilon}\right).$$
In order to show that  $\mu$ is a Carleson measure for $K_{\Theta}$, 
we will verify the conditions of the following theorem of 
A.L.~Volberg and S.R.~Treil \cite{vt}: {\it if for any $I$ 
with $S(I)\cap \Omega_{\epsilon}\neq \varnothing$, 
we have $\mu\left(S(I)\right)\lesssim \left|I\right|$ 
\textup(with constants independent  of $I$\textup), then $\mu$ is 
a Carleson measure for $K_{\Theta}$.}

Let $I = [a,b]$ be an interval such that  
$S(I)\cap \Omega_{\epsilon}\neq \varnothing$.  
If $a\in I_n$, then $\left|I\right|\geq c \left|I_n\right|$. 
We can write $I\subset \cup_{n\in\mathcal J}I_n$, for some finite set  $\mathcal J$ and $\left|I\right|\geq\sum_{n\in\mathcal J}\left|I_n\right|$.

If $z=x+iy\in \Omega_{\delta}^c$ with $x\in I_n$, we know that 
$d_{\epsilon}(z)\asymp d_{\epsilon}(x) \asymp \left|I_n\right|$, 
with constants independent on $I_n$ and $z$. 
We also have $y\lesssim d_{\epsilon}(z) \lesssim \left|I_n\right|$
by \eqref{ert2}. Hence,
$$
\mu\big((I_n\times [0,+\infty))\cap \Omega_{\delta}^c \big)=
\int_{I_n\times [0,+\infty)\cap \Omega_{\delta}^c}\frac{1}{d_{\epsilon}(z)}dm(z)
\lesssim \left|I_n\right|.
$$
Therefore,
$$ 
\mu\left(S(I)\right) \leq\sum_{n\in\mathcal J}\mu\big((I_n\times [0,+\infty))
\cap \Omega_{\delta}^c\big)\lesssim \sum_{n\in\mathcal J}\left|I_n\right|\lesssim \left|I\right|.
$$
Thus $\mu$ is a Carleson measure for $K_\Theta$.
\end{proof}

Corollary \ref{one1} for one-component inner function follows immediately from 
Theorem \ref{main} and Corollary \ref{onecom} since 
in this case $d_\vep(z) \asymp \|k_z\|_2^{-2}$, $z\in\Omega_\delta^c$.
Using that $\|k_z\|_2^{2} \asymp|\Theta'(x)| =2\phi'(x)$, $z=x+iy\in 
\Omega_\delta^c$, one can easily deduce another representation for 
the one-component case:

\begin{corollary}
\label{one2}
Let $\Theta = E^\sharp/E$ be one-component. Put 
$$
W_1(z) = W_0(z) \big(1+ (\phi'(\Re z))^{1/2}\chi_A(z) \big), \qquad z\in \C^+\cup\R,
$$
where $A= \{z=x+iy: 0\le y\le (\phi'(x))^{-1}\}$, and $W_1(z) = W_1(\overline z)$,
$z\in \C^-$. Then $\HH(E) = \FF_{W_1}$.
\end{corollary}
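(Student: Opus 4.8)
The plan is to deduce the statement from Corollary~\ref{one1} (equivalently, from Theorem~\ref{main} combined with Corollary~\ref{onecom}), the only new ingredient being that the sublevel set $\Omega_\delta^c$ gets replaced by the vertical region $A$. First I would record the basic geometric fact. Reuse the decomposition $\R=\cup_n I_n$ from the proof of Theorem~\ref{main}, so that $|I_n|\asymp{\rm dist}(I_n,\Omega_\delta)$, $|I_n|\asymp|I_{n+1}|$ for contiguous intervals, $S(2I_n)\subset\Omega_\delta^c$, and $d_\vep(z)\asymp|I_n|$ for $z\in S(I_n)$; since $\Theta$ is one-component, Corollary~\ref{onecom} gives $d_\vep(x)\asymp|\Theta'(x)|^{-1}=(2\phi'(x))^{-1}$ on $\R$, whence $\phi'(x)\asymp|I_n|^{-1}$ for $x\in I_n$. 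In particular $\phi'$ is comparable to a constant on $I_n$ (and on each neighbouring interval), the part of $A$ lying over $I_n$ is a region of height $\asymp|I_n|$, i.e.\ comparable to the Carleson square $S(I_n)$, and on any disc centred at a point of $I_n$ of radius a small enough multiple of $|I_n|$ (or of $1$, when $|I_n|>1$) the function $\phi'(\Re\,\cdot)$ is $\asymp\phi'(x)\asymp|I_n|^{-1}$. Note also that $W_1^2\asymp W_0^2\big(1+\phi'(\Re z)\chi_A(z)\big)$.

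For the inclusion $\HH(E)\subseteq\FF_{W_1}$ one only needs, on top of the estimate $\int_{\C^+}|F/E|^2(1+\Im z)^{-2}\,dm\le\|F\|_E^2$ recorded just before \eqref{wb} (and its analogue for $F^\sharp/E$, which handles the $\C^-$ part), that the measure $d\mu(z)=\phi'(\Re z)\chi_A(z)\,dm(z)$ is a Carleson measure for $K_\Theta$. This is immediate from the Volberg--Treil criterion quoted in the proof of Theorem~\ref{main}, because for every interval $I$
$$
\mu\big(S(I)\big)=\int_I\phi'(x)\,\min\big(|I|,(\phi'(x))^{-1}\big)\,dx\le\int_I\phi'(x)\cdot(\phi'(x))^{-1}\,dx=|I|.
$$
(For this direction one-componentness is not even used.)

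For the reverse inclusion $\FF_{W_1}\subseteq\HH(E)$ I would copy the argument of Theorem~\ref{main} almost verbatim. Since $W_1\ge W_0$, the fact that $F/E$ and $F^\sharp/E$ lie in $H^2$ for $F\in\FF_{W_1}$ follows word for word through Lemma~\ref{phi} with $\psi(y)=(y+1)/y$. To get $F/E\in L^2(\R)$ one estimates $|F(x)/E(x)|^2$, for $x\in I_n$, by the mean of $|F/E|^2$ over a disc $D(x)$ centred at $x$ of radius $\alpha|I_n|$ (or $\alpha$, if $|I_n|>1$); for $\alpha$ a sufficiently small absolute constant one has $D(x)\subset S(2I_n)\cup\overline{S(2I_n)}$, so $|E|\asymp|E(\overline{\,\cdot\,})|$ on $D(x)$ (from $|\Theta|\ge\delta$ on $S(2I_n)$ together with $|E|\ge|E^\sharp|$ on $\C^+$), $D(x)\cap\C^+\subset A$ because $A$ has height $\asymp|I_n|$ over $I_n$, and $\phi'(\Re\,\cdot)\asymp|I_n|^{-1}$ there; hence $W_1\asymp|E|^{-1}|I_n|^{-1/2}$ on $D(x)$ when $|I_n|\le 1$, and $W_1\asymp|E|^{-1}$ when $|I_n|>1$. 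This is exactly the behaviour of $\widetilde W$ exploited in Theorem~\ref{main}, so integrating over $I_n$ and summing with finite multiplicity yields $\|F/E\|_{L^2(\R)}^2\lesssim\|F\|_{\FF_{W_1}}^2$, and the proof is complete.

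The one point that requires thought is precisely this matching of regions: $A$ is genuinely different from $\Omega_\delta^c$ (for instance $A$ contains points at which $|\Theta|$ is arbitrarily small), so no pointwise comparison $W_1\asymp W$ is available and both inclusions must be re-run. All the content of the reduction is the single fact, valid for a one-component $\Theta$, that the height $(\phi'(x))^{-1}$ of $A$ over $I_n$ is comparable to $|I_n|\asymp{\rm dist}(I_n,\Omega_\delta)$; given that, everything else is a transcription of the proof of Theorem~\ref{main}.
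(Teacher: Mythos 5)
Your proof is correct and follows the route the paper intends: Corollary \ref{one2} is presented there as an easy consequence of Theorem \ref{main} and Corollary \ref{onecom}, and your argument is exactly the detailed implementation — re-running the interval/Carleson-square machinery from the proof of Theorem \ref{main} with the enhancement region $\Omega_\delta^c$ replaced by $A$, the matching of the two regions coming from $d_\vep(x)\asymp (\phi'(x))^{-1}$ for one-component $\Theta$. Your side remarks are also accurate: since $A\ne\Omega_\delta^c$ no pointwise comparison of $W_1$ with the weight of Corollary \ref{one1} is available, so both inclusions must indeed be re-run, while the inclusion $\HH(E)\subset\FF_{W_1}$ follows from the elementary Carleson bound $\mu(S(I))\le |I|$ and does not use one-componentness at all.
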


We conclude this section with an example which shows that 
the appearance of the level set is intrinsic in our problem. 
Let $\Theta = E^\sharp/E$ be one-component. Put 
$$
W_2(z) = W_0(z) (1+\|k_z\|_2), \qquad z\in \C^+\cup\R,
$$
and $W_2(z) = W_2(\overline z)$,
$z\in \C^-$. Since $W_2\gtrsim W$, where $W$ is the weight from 
Corollary \ref{one1}, we have $\FF_{W_2}\subset \HH(E)$. 

However, the converse statement need not be true. 
Consider the de Branges space 
$\HH(E)$, defined by the Hermite--Biehler function $E$ 
with zeros 
$\left\{\overline z_n \right\}_{n\in\mathbb{Z}}$, 
such that $z_n= |n|^{\alpha}{\rm sign}\, n +i$, 
$\frac{1}{2}<\alpha<1$, and put $\Theta=E^{\sharp}/E$. 
It is easy to show (see, e.g., \cite{bbh})
that $|\Theta'(x)|\asymp (|x|+1)^{\frac{1}{\alpha} -1}$, $x\in\R$. 
Therefore, $\phi'(x)dx$ is a locally doubling measure and $\Theta$ is one-component.
Moreover, there exists a constant $c>0$ such that for $z=x+iy\in \C^+$,
$$
\|k_z\|_2^2 = \frac{1-\left|\Theta(z)\right|^2}{4\pi y} 
\asymp\begin{cases}x^{\frac{1}{\alpha}-1},\ \left|x\right|\leq c y^{-\frac{\alpha}{1-\alpha}},\\
y^{-1},\ \left|x\right|> c y^{-\frac{\alpha}{1-\alpha}}.
\end{cases} 
$$
We will show that $\HH(E)$ is not contained in $ \FF_{W_2}$. 
Indeed, it is easy to find a function 
$f$ in the model space $K_{\Theta}$ such that 
$\left|f(x+iy)\right|\asymp (|x|+1)^{-1/2} \log(|x|+2)^{-1}$, $x\in\R$, $0\le y\le 2$
(for much more general results of this type see \cite{belov}).
Then $F=fE \in \HH(E)$. However, setting $\gamma=\frac{\alpha}{1-\alpha}$  we get
$$
\int_0^1 \int_{\R} |f(z)|^2\|k_z\|^2_2 \gtrsim
\int^{1}_{0}\frac{1}{y}\int^{+\infty}_{c/y^{\gamma}}\frac{dx}{x\log^{2}x}dx
\asymp\int_{0}^1 \frac{dy}{y|\log y|}=\infty.
$$
Thus, $F\notin \FF_{W_2}$. 
\bigskip


\section{Proof of Theorem \ref{F Wb}}
\label{th1}

The implication (ii)$\Longrightarrow$(i) follows from Theorem \ref{main}. Indeed, 
if $\overline z_n = x_n -iy_n$ is a zero of $E$, then
by \eqref{def phi'} $\phi'(x_n)\ge y_n^{-1}$. 
Thus, the inclusion $\varphi'\in L^{\infty}(\R)$ 
implies that $\inf_n y_n >0$ and 
it follows from \eqref{lev} that $\inf_{x\in \R} d_\vep(x)>0$ 
for any $\vep \in (0,1)$. Therefore, we can choose $0<\vep<\delta<1$
such that for the weight $W$ defined by \eqref{mw} we have 
$W \asymp W_0$.  Thus, $\FF_{W_0} \subset \HH(E)$.

In the implication (i)$\Longrightarrow$(ii) we will use the following lemma.

\begin{lemma}
\label{bab}
If $\FF_{W_0} = \HH(E)$ with equivalence of norms and $Z_E = \{\overline z_n\}$,
then $\inf_{n} \Im z_n>0$.
\end{lemma}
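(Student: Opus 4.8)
Suppose, for contradiction, that there is a subsequence of zeros $\overline z_{n_k}$ with $y_{n_k} = \Im z_{n_k} \to 0$. The idea is that the reproducing kernel of $\HH(E)$ at a point near such a zero is ``large'' in a way that is incompatible with membership in $\FF_{W_0}$, because $\FF_{W_0}$ controls the function by its mean over a disc of radius comparable to $1+\Im z$, which is $\asymp 1$ near the real axis, whereas $\HH(E)$ allows the function to be concentrated on a much smaller scale. Concretely, I would fix a zero $\overline z_n$ with $y_n$ small and test the hypothesized norm equivalence on a well-chosen element of $\HH(E)$ localized near $z_n$.

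A natural test function is built from the reproducing kernel. Recall $\|k_{z}\|_2^2 = \dfrac{1-|\Theta(z)|^2}{4\pi \Im z}$ for $z\in\C^+$; evaluating near a zero $z_n$ of $E$ (equivalently, a zero of $\Theta$ in $\C^+$), say at $z_n$ itself, gives $|\Theta(z_n)| = 0$ and hence $\|k_{z_n}\|_2^2 = (4\pi y_n)^{-1}$, which blows up as $y_n\to 0$. Set $F_n = K_{z_n}/\|K_{z_n}\|_E$, the normalized reproducing kernel of $\HH(E)$, so $\|F_n\|_{\HH(E)} = 1$ and, since $K_{z_n} = E\,\overline{E(z_n)}\,k_{z_n}$, we have $F_n = E \cdot g_n$ where $g_n = k_{z_n}/\|k_{z_n}\|_2$ (up to a unimodular constant) is the normalized reproducing kernel of the model space $K_\Theta$. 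The plan is to estimate $\|F_n\|_{\FF_{W_0}}^2 = \int_{\C}|F_n W_0|^2\,dm$ from below and show it tends to $\infty$, contradicting the assumed equivalence $\|F_n\|_{\FF_{W_0}} \asymp \|F_n\|_{\HH(E)} = 1$.

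For the lower bound I would restrict the integral to a disc $D(z_n, c y_n)$ for a small absolute constant $c$. On this disc, $|E(z)| \asymp |E(z_n)|$ (using that $E$ has bounded type and the zero of $E$ at $\overline z_n$ is at distance $\asymp y_n$ from $z_n$, while $E$ has no other zeros too close — here one may need a mild pigeonhole or subharmonicity argument, or pass to a subsequence where $z_n$ are separated); also $1+\Im z \asymp 1$ there, so $W_0(z) \asymp |E(z_n)|^{-1}$. Hence
$$
\|F_n\|_{\FF_{W_0}}^2 \gtrsim \frac{1}{|E(z_n)|^2}\int_{D(z_n, c y_n)} |F_n(z)|^2\,dm(z)
= \frac{|E(z_n)|^2|\overline{E(z_n)}|^2}{|E(z_n)|^2\,\|K_{z_n}\|_E^2}\int_{D(z_n, c y_n)} |k_{z_n}(z)|^2\,dm(z).
$$
Since $\|K_{z_n}\|_E^2 = |E(z_n)|^2\|k_{z_n}\|_2^2$, this simplifies to $\gtrsim \|k_{z_n}\|_2^{-2}\int_{D(z_n,cy_n)}|k_{z_n}|^2\,dm$. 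Now $k_{z_n}(z) = \dfrac{i}{2\pi}\dfrac{1-\overline{\Theta(z_n)}\Theta(z)}{z-\overline{z_n}} = \dfrac{i}{2\pi}\dfrac{1}{z-\overline{z_n}}$ since $\Theta(z_n)=0$, and on $D(z_n, cy_n)$ we have $|z-\overline{z_n}| \le |z-z_n| + 2y_n \lesssim y_n$, so $|k_{z_n}(z)|^2 \gtrsim y_n^{-2}$ there, giving $\int_{D(z_n,cy_n)}|k_{z_n}|^2\,dm \gtrsim y_n^{-2}\cdot y_n^2 = 1$. Combined with $\|k_{z_n}\|_2^{-2} = 4\pi y_n \to 0$, this lower bound alone goes to $0$, which is the wrong direction — so I must be more careful and integrate $|k_{z_n}|^2$ against $W_0^2$ over a \emph{larger} region, not just the small disc. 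The correct computation is $\|F_n\|_{\FF_{W_0}}^2 \asymp \|k_{z_n}\|_2^{-2}\int_{\C^+} |k_{z_n}(z)|^2\,\dfrac{|E(z)|^2}{|E(z)|^2(1+\Im z)^2}\,dm(z) = \|k_{z_n}\|_2^{-2}\int_{\C^+}\dfrac{|k_{z_n}(z)|^2}{(1+\Im z)^2}\,dm(z)$, using $|E(z)|\asymp|E(\overline z)|$ off the level set to handle the lower half-plane. The main obstacle is thus showing this last integral grows faster than $\|k_{z_n}\|_2^2 = (4\pi y_n)^{-1}$: since $k_{z_n}(z) = \frac{i}{2\pi}(z-\overline{z_n})^{-1}$, one has $\int_{\C^+}\frac{|k_{z_n}|^2}{(1+\Im z)^2}dm = \frac{1}{4\pi^2}\int_{\C^+}\frac{dm(z)}{|z-\overline{z_n}|^2(1+\Im z)^2}$, and a direct polar computation around $\overline z_n = x_n - iy_n$ shows this behaves like $\asymp \log(1/y_n)$ (the region $y_n \lesssim \Im z \lesssim 1$ contributes $\int_{y_n}^{1}\frac{dt}{t}\asymp\log(1/y_n)$). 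Therefore $\|F_n\|_{\FF_{W_0}}^2 \gtrsim y_n\log(1/y_n)$ — which again tends to $0$. This signals that the contradiction must come not from $K_{z_n}$ itself but from a function in $\HH(E)$ that is \emph{more sharply peaked}, e.g. of the form $g_n(z) = \frac{1}{(z-\overline{z_n})^{2}}\cdot(\text{correction to stay in }K_\Theta)$, or from an appropriate linear combination; the genuinely hard step is constructing a normalized element of $\HH(E)$ whose $\FF_{W_0}$-norm is unbounded, and the cleanest route is probably to invoke Theorem \ref{main}: if $\FF_{W_0}=\HH(E)$ then comparing $W_0$ with the weight $W$ of \eqref{mw} forces $d_\vep$ to be bounded below on $\R$, hence by \eqref{lev} $\|k_x\|_2^{-2}\gtrsim 1$ on $\R$, hence $\phi'\in L^\infty$, hence $\inf_n y_n>0$ by \eqref{def phi'} — but that is exactly the argument already used for (ii)$\Rightarrow$(i) run in reverse, so instead I would argue directly that $\FF_{W_0}\subsetneq\HH(E)$ strictly when $\inf_n y_n = 0$ by exhibiting $F\in\HH(E)\setminus\FF_{W_0}$: take $F = E h$ where $h\in K_\Theta$ is a Belov-type peak function with $|h(x+iy)|\asymp 1$ for $x$ in a union of intervals $I_{n_k}$ around $x_{n_k}$ of length $\asymp y_{n_k}$ and $0\le y\le 1$, normalized so $\sum_k |I_{n_k}|\cdot(\text{something})<\infty$ in $K_\Theta$ but $\int |h|^2/(1+\Im z)^2\,dm$ over $\cup_k I_{n_k}\times[0,1]$ diverges. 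I expect the construction of $h$ and the verification that it genuinely lies in $K_\Theta$ (the analogue of the example at the end of Section \ref{th2}, citing \cite{belov}) to be the crux; everything else is the routine subharmonicity-plus-$|E|\asymp|E(\overline z)|$ bookkeeping already rehearsed above.
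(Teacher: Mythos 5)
Your proposal actually contains the correct proof in its middle, and then discards it because of a sign-of-contradiction confusion. Norm equivalence $\FF_{W_0}=\HH(E)$ means \emph{both} $\|F\|_{\FF_{W_0}}\lesssim\|F\|_E$ and $\|F\|_E\lesssim\|F\|_{\FF_{W_0}}$; you do not need the Fock norm of a normalized test function to blow up --- it is equally fatal for it to tend to $0$. Your computation for the normalized kernel $F_n=K_{z_n}/\|K_{z_n}\|_E$ (note $\Theta(z_n)=0$, so $k_{z_n}(z)=\tfrac{i}{2\pi}(z-\overline z_n)^{-1}$ and $F_n$ is, up to constants, $E(z)\sqrt{y_n}/(z-\overline z_n)$) gives exactly $\|F_n\|^2_{\FF_{W_0}}\asymp y_n\log(1/y_n)$, which tends to $0$ along any subsequence with $y_n\to 0$ while $\|F_n\|_E\asymp 1$: contradiction, hence $\inf_n y_n>0$. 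This is precisely the paper's argument with the test functions $f_n(z)=E(z)\sqrt{y_n}/(z-\overline z_n)$ and the upper bound $\|f_n\|^2_{\FF_{W_0}}\lesssim y_n\log\frac{1+y_n}{y_n}+y_n$. The only technical point you gloss over is the lower half-plane: to get a genuine upper bound there one uses the Hermite--Biehler inequality in the form $\left|E(z)/E(\overline z)\right|\le\left|(z-\overline z_n)/(\overline z-\overline z_n)\right|$ for $z\in\C^-$ (so that the zero of $E$ at $\overline z_n$ cancels the singularity of $1/(z-\overline z_n)$); the vague ``$|E(z)|\asymp|E(\overline z)|$ off the level set'' is not enough near $\overline z_n$.

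The route you finally settle on, by contrast, cannot work: you propose to exhibit $F\in\HH(E)\setminus\FF_{W_0}$ via a Belov-type peak function, but the inclusion $\HH(E)\subset\FF_{W_0}$ holds for \emph{every} de Branges space (it is noted immediately after \eqref{wb}, since $F/E,F^\sharp/E\in H^2$ give the area bound directly), so no such $F$ exists; when zeros approach $\R$ the only thing that can fail is the reverse inclusion, equivalently the inequality $\|F\|_E\lesssim\|F\|_{\FF_{W_0}}$ --- which is exactly what the kernel computation disproves. The other fallback you mention, deducing boundedness of $\phi'$ from Theorem \ref{F Wb} or Theorem \ref{main}, would be circular here, since the implication (i)$\Rightarrow$(ii) of Theorem \ref{F Wb} is proved in the paper using this very lemma. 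So the gap is not computational but structural: keep your $F_n$ estimate, observe it violates the correct direction of the equivalence, and delete the final construction.
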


\begin{proof}
Recall that the zeros of $E$ are denoted by $\overline z_n=x_n-iy_n,$ 
with $y_n>0$ for each $n$. Let us consider the following test functions
$$
f_n(z)=E(z)\frac{\sqrt{y_n}}{z-\overline z_n},\qquad z\in\C,\ n\in\N.  
$$
We shall compare ${\FF_W}$ and $\HH(E)$ norms of $f_n$. We have
\begin{align*}
  \left\|f_n\right\|^{2}_{\FF_W}&=\int_{\C^+}\frac{y_n}{\left|z-\overline z_n\right|^2}\frac{dm(z)}{\left(1+\left|\Im z\right|\right)^2}+\int_{\C^-}\frac{y_n}{\left|z-\overline z_n\right|^2}\left|\frac{E(z)}{E(\overline z)}\right|^2\frac{dm(z)}{\left(1+\left|\Im z\right|\right)^2}\\
  &\leq \int_{\C^+}\frac{y_n}{\left|z-\overline z_n\right|^2}\frac{dm(z)}{\left(1+\left|\Im z\right|\right)^2}+\int_{\C^-}\frac{y_n}{\left|z-\overline z_n\right|^2}  \left|\frac{z-\overline z_n}{\overline z-\overline z_n}\right|^2 \frac{dm(z)}{\left(1+\left|\Im z\right|\right)^2}\\
\end{align*}
since for $z\in \C^-$, $z=x-iy$, $y>0$, we have, by \eqref{merom}, 
$$ 
  \left|\frac{E(z)}{E(\overline z)}\right|^2 =e^{-2\tau y}
  \prod_{k\neq n}\left|\frac{z-\overline z_k}
  {\overline z-\overline z_k}\right|^2\left|\frac{z-\overline z_n}
  {\overline z-\overline z_n}\right|^2  \le \left|\frac{z-\overline z_n}
  {\overline z-\overline z_n}\right|^2.
$$
Using the change of variable $w=\overline z$ in the integral over $\C^-$, we get
\begin{align*}
\left\|f_n\right\|^{2}_{\FF_W}&\leq 2\int_{\C^+}\frac{y_n}{\left|z-\overline z_n\right|^2}\frac{dm(z)}{\left(1+\left|\Im z\right|\right)^2}\\
&=2 \int^{+\infty}_{0}\left(\int^{+\infty}_{-\infty}\frac{y_n}{(x-x_n)^2+(y+y_n)^2}dx\right)\frac{dy}{\left(1+y\right)^2}\\
&=2\pi \int^{+\infty}_{0}\frac{y_n}{y+y_n}\frac{dy}{\left(1+y\right)^2}\\
&\lesssim \int^{1}_{0}\frac{y_n}{y+y_n}dy+y_n \int^{+\infty}_{1} \frac{dy}{y\left(1+y\right)^2}\\
&\asymp y_n \log\left(\frac{1+y_n}{y_n}\right)+y_n.
\end{align*}
Since $\left\|f_n\right\|_E\asymp 1$, we conclude that $y_n \gtrsim 1$.
\end{proof}

\begin{proof}[Proof of {\rm (i)$\Longrightarrow$(ii)}]
Suppose that $\FF_W= \HH(E)$ with equivalent norms, and that $\varphi'$ 
is not bounded. Introducing the following test functions
$$ 
     g_x(z)=\frac{K_x(z)}{\overline{E(x)}}=\frac{E(z)}{z-x}
     \left(1-\overline{\Theta(x)}\Theta(z)\right),\qquad  x\in\R,\ z\in\C, 
$$
and assuming that $\phi'(x)>1$, 
we will again compare their ${\FF_W} $ and $\HH(E)$ norms. 
As in Lemma \ref{bab}, we have
\begin{align*}
\left\|g_x\right\|^{2}_{\FF_W}&=\int_{\C^+}\left|\frac{1-\overline{\Theta(x)}
\Theta(z)}{z-x}\right|^{2} \frac{1}{\left(1+\left|\Im z\right|\right)^2}dm(z) 
\\
& \quad +\int_{\C^-}\left|\frac{1-\overline{\Theta(x)}\Theta(z)}{z-x}\right|^{2}  \left|\frac{E(z)}{E(\overline z)}\right|^2  \frac{1}{\left(1+\left|\Im z\right|\right)^2}dm(z)\\
& \quad \quad \asymp \int_{\C^+}\left|\frac{1-\overline{\Theta(x)}\Theta(z)}{z-x}\right|^{2} \frac{1}{\left(1+\left|\Im z\right|\right)^2}dm(z).
\end{align*}
For any positive number $\epsilon<1$, 
let us denote by $S_\vep$ the strip $\{0\le \Im z \le \vep\}$ 
and by $R_{\epsilon}$ the rectangle 
$\left\{z=\xi+i\eta\in\C^+,\ \left|\xi-x\right| 
\le \epsilon,\ 0 \le \eta \le \epsilon \right\}$. 
We split the integral as follows:
$$
\begin{aligned}
  \int_{\C^+} & \left|\frac{1-\overline{\Theta(x)}\Theta(z)}{z-x}\right|^{2} 
  \frac{1}{\left(1+\left|\Im z\right|\right)^2}dm(z)  \\
& =\left(\int_{\left\{\Im z > \epsilon\right\}}  +
\int_{S_\vep \setminus R_{\epsilon}}   +\int_{R_{\epsilon}}\right)\left|\frac{1-\overline{\Theta(x)}\Theta(z)}{z-x}\right|^{2} \frac{1}{\left(1+\left|\Im z\right|\right)^2}dm(z).
\end{aligned}
$$
We have 
\begin{align*}
	\int_{\left\{\Im z\geq\epsilon\right\}}\left|\frac{1-\overline{\Theta(x)}\Theta(z)}{z-x}\right|^{2} \frac{1}{\left(1+\left|\Im z\right|\right)^2}dm(z)
	&\lesssim \int^{\infty}_{\epsilon}\int_{\R}\frac{d\xi}{\left(\xi-x\right)^2+\eta^2}\frac{d\eta}{(1+\eta)^2}\\
	& = \pi \int^{\infty}_{\epsilon}\frac{1}{\eta}\frac{d\eta}{(1+\eta)^2} 
        \asymp 1+\log \frac{1}{\epsilon}.
\end{align*}
Now
\begin{align*}
	\int_{S_{\epsilon}\setminus R_{\epsilon}}\left|
        \frac{1-\overline{\Theta(x)}\Theta(z)}{z-x}\right|^{2} 
        \frac{1}{\left(1+\left|\Im z\right|\right)^2}dm(z)& 
        \lesssim\int^{\epsilon}_{0}\int_{\left\{\left|\xi-x\right|>\epsilon\right\}}
        \frac{d\xi}{\left(\xi-x\right)^2}\frac{d\eta}{(1+\eta)^2}\\
	&\asymp \frac{1}{\vep} \int^{\epsilon}_{0}
        \frac{d\eta}{(1+\eta)^2} \asymp 1.
\end{align*}
To estimate the integral over $R_\vep$, note first that, for $z=x+iy\in\C^+$,
$|\Theta'(z)|\le |\Theta'(x)| = 2\phi'(x)$ (this follows immediately from 
\eqref{ert}). Also, if $\inf_n y_n>0$, it follows from 
\eqref{phase}, that $\phi'(x) \asymp\phi'(t)$, $|x-t|\le\vep$, with the constants 
depending on $\inf_n y_n$ and independent on $\vep\in (0,1)$. We conclude that 
$\max_{u\in R_{\epsilon}}\left|\Theta'(u)\right| \lesssim |\phi'(x)|$. 
	
Since $\overline{\Theta(x)}=1/\Theta(x)$, we get
\begin{align*}
	\int_{R_{\epsilon}}\left|\frac{1-\overline{\Theta(x)}\Theta(z)}{z-x}\right|^{2} \frac{1}{\left(1+\left|\Im z\right|\right)^2}dm(z)
	&\lesssim \int_{R_{\epsilon}}\left|\frac{\Theta(x)-\Theta(z)}{z-x}\right|^{2}dm(z)\\
	&\lesssim \epsilon^2 \max_{u\in R_{\epsilon}}\left|\Theta'(u)\right|^{2} 
        \lesssim \epsilon^2 \left[\varphi'(x)\right]^2.
\end{align*}
Choosing $\epsilon \in(0,1)$  such that $\epsilon^2 
\left[\varphi'(x)\right]^2\asymp \epsilon^{-1}$,  
we have 
$\left\|g_x\right\|^2_{\FF_W}\lesssim \epsilon^{-1} \asymp [\phi'(x)]^{2/3}$
which, altogether with the fact that $\left\|g_x\right\|_{\HH(E)}^2\asymp
\varphi'(x)$, contradicts the equivalence of the norms if $\phi'(x)$ takes 
arbitrarily large values. Thus,  $\varphi' \in L^\infty(\R)$. 
\end{proof}
\bigskip


\section{Weight associated with the spectral data}
\label{spect}                                     

Let $\HH(E)$ be a de Branges space and $\alpha\in [0, \pi)$. Then the function
$E_\alpha(z) = e^{i\alpha}E - e^{-i\alpha}E^\sharp$ has only simple real zeros 
that we denote $t_{n, \alpha}$. It is one of the basic results of the de Branges 
theory (see \cite[Theorem 22]{db}) that the reproducing kernels 
$\{K_{t_{n,\alpha}}\}$ form
an orthogonal basis in $\HH(E)$ for all values of $\alpha$ except at most one.
This exceptional $\alpha$ corresponds to the case when $E_\alpha\in \HH(E)$ 
or, equivalently, $e^{2i\alpha}-\Theta \in H^2$. Also, 
$$
\frac{E_\alpha(z)}{z-t_{\alpha, n}} =
\frac{\pi}{i E(t_{\alpha, n})} K_{t_{\alpha, n}}(z) \quad\text{and}\quad
\Big\| \frac{E_\alpha(z)}{z-t_{\alpha, n}} \Big\|_E^2 
= \frac{\pi}{\phi'(t_{\alpha,n})}.
$$

Since $\HH(E) = \HH(e^{i\alpha} E)$ for any $\alpha$, we can assume
in what follows that $A= (E+E^{\sharp})/2 \notin \HH(E)$ 
(which is equivalent to $1+\Theta\notin H^2$). 
Put $T:=Z_A=\left\{t_n\right\}$ and $\mu_n = [\phi'(t_n)]^{-1}$. 
Then any function
in $\HH(E)$ has the representation
\begin{equation}
   \label{sp}
     F(z)=A(z)\sum_{n}\frac{c_n\mu^{1/2}_{n}}{z-t_n},\qquad
     \left\{c_n\right\}\in \ell^{2},
\end{equation}
where
$$
c_n=\frac{1}{\mu^{1/2}_{n}}\frac{F(t_n)}{A'(t_n)}
$$
and $\|F\|_E^2 = \pi \|\{c_n\}\|_{\ell^2}^2$.
We will call the pair $(T, \mu)$, where $\mu = \sum_n \mu_n \delta_{t_n}$, 
the {\it spectral data} for the de Branges space $\HH(E)$. Note that 
we have $\sum_n (t_n^2+1)^{-1}\mu_n <\infty$. 

It is clear that $\|F\|_E^2 = \pi\int_\R |F/E|^2 d\mu$. We will replace 
the discrete measure $\mu$ by a weight concentrated near the points $t_n$.
Let $r_n>0$ be such that $r_n<{\rm dist}\,(t_n, \{t_j\}_{j\ne n})/2$ and
\begin{equation}
   \label{rn}
      \sum_n \frac{r_n^2}{\mu_n} \sum_{j\ne n} \frac{\mu_j}{|t_n-t_j|^2}<\infty.
\end{equation}
Put $W(z) = W_0(z) +W_T(z)$, where
$$
W_T(z) = \sum_n \frac{1}{\mu_n^{1/2} r_n}\cdot
\frac{|z-t_n|}{|A(z)|}\chi_{D(t_n, r_n)}.
$$

\begin{theorem}
\label{clark}
$\HH(E) = \FF_W$.
\end{theorem}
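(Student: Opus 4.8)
The plan is to prove separately the set inclusions $\HH(E)\subseteq\FF_W$ and $\FF_W\subseteq\HH(E)$, with control of norms in each direction; the resulting estimates (or the Closed Graph Theorem) then give equivalence of norms. Throughout I use that $A^\sharp=A$ (so $W_T$ is given by the same formula on $\C^-$), that $A'(t_n)=\tfrac12 E(t_n)\Theta'(t_n)$ and hence $|A'(t_n)|=|E(t_n)|\phi'(t_n)=|E(t_n)|/\mu_n$ (so $|c_n|^2=\mu_n|F(t_n)/E(t_n)|^2$ for $F$ as in \eqref{sp}), that the discs $D(t_n,r_n)$ are pairwise disjoint, and that $W\ge W_0>0$.

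\emph{The inclusion $\HH(E)\subseteq\FF_W$} is a direct estimate. Since $|FW|^2\le 2|FW_0|^2+2|FW_T|^2$ and $\HH(E)\subseteq\FF_{W_0}$ with $\|F\|_{\FF_{W_0}}\lesssim\|F\|_E$ (see Section~\ref{th2}), only $\int_\C|FW_T|^2\,dm$ must be bounded. On $D(t_n,r_n)$ one has $|FW_T|^2=\mu_n^{-1}r_n^{-2}|z-t_n|^2\,|F/A|^2$; inserting $F/A=\sum_m c_m\mu_m^{1/2}(z-t_m)^{-1}$ from \eqref{sp}, the term $m=n$ contributes exactly $\pi|c_n|^2$ after integration, while the remaining terms are controlled, using $|z-t_m|\ge\tfrac12|t_n-t_m|$ on $D(t_n,r_n)$ and the Cauchy--Schwarz inequality, by $C\,\frac{r_n^2}{\mu_n}\big(\sum_{m\ne n}\frac{\mu_m}{|t_n-t_m|^2}\big)\|\{c_m\}\|_{\ell^2}^2$. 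Summing in $n$ and using \eqref{rn} gives $\int_\C|FW_T|^2\,dm\lesssim\|F\|_E^2$, hence $\|F\|_{\FF_W}\lesssim\|F\|_E$ (the lower half-plane being handled symmetrically, with $F^\sharp$ in place of $F$).

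\emph{The inclusion $\FF_W\subseteq\HH(E)$.} Fix $F\in\FF_W$. From $W\ge W_0$ and the sub-mean-value inequality for $|F/E|^2$ on the discs $D(z,\Im z/2)$ and on $D(x+\tfrac i2,\tfrac12)\subset\overline{\C^+}$ (where $E$ is zero-free) one obtains, as in Section~\ref{th2}, $|F(z)/E(z)|\lesssim\frac{\Im z+1}{\Im z}\|F\|_{\FF_W}$ for $z\in\C^+$ and $|F(x)/E(x)|\lesssim\|F\|_{\FF_W}$ for $x\in\R$, and the same for $F^\sharp/E$; by Lemma~\ref{phi} (with $\psi(y)=C\max(1,1/y)$) and \cite[Theorem~11]{db}, $F/E$ and $F^\sharp/E$ lie in $\mathcal{N}_+(\C^+)$, hence, being bounded on $\R$, in $H^\infty(\C^+)$. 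Using the $W_T$--part, $H_n(z):=(z-t_n)F(z)/A(z)$ is analytic on $D(t_n,r_n)$ with $H_n(t_n)=F(t_n)/A'(t_n)$, so $\int_{D(t_n,r_n)}|FW_T|^2\,dm=\mu_n^{-1}r_n^{-2}\int_{D(t_n,r_n)}|H_n|^2\,dm\ge\pi|c_n|^2$ with $c_n:=\mu_n^{-1/2}F(t_n)/A'(t_n)$; summing over the disjoint discs, $\{c_n\}\in\ell^2$ with $\sum_n|c_n|^2\le\pi^{-1}\|F\|_{\FF_W}^2$. By \eqref{sp}, $\widetilde F:=A\sum_n c_n\mu_n^{1/2}(z-t_n)^{-1}\in\HH(E)$ with $\|\widetilde F\|_E^2=\pi\sum_n|c_n|^2\lesssim\|F\|_{\FF_W}^2$ and $\widetilde F(t_n)=F(t_n)$ for all $n$. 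It remains to show $G:=F-\widetilde F\equiv 0$, which yields $F=\widetilde F\in\HH(E)$ and $\|F\|_E\lesssim\|F\|_{\FF_W}$. Here $G$ is entire, lies in $\FF_{W_0}$ (as both $F$ and $\widetilde F$ do), and vanishes at every $t_n$; thus $G/E,\,G^\sharp/E\in H^\infty(\C^+)$ and $Q:=G/A$ is entire. Writing $Q=(G/E)\cdot\tfrac{2}{1+\Theta}$ and $Q^\sharp=(G^\sharp/E)\cdot\tfrac{2}{1+\Theta}$ in $\C^+$ and noting that $\tfrac{2}{1+\Theta}=1+\tfrac{1-\Theta}{1+\Theta}$ has positive real part in $\C^+$, hence lies in $\mathcal{N}_+(\C^+)$, we get $Q,\,Q^\sharp\in\mathcal{N}_+(\C^+)$, so $Q$ is an entire function of exponential type zero. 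Finally $G=QA\in\FF_{W_0}$ together with $A\notin\FF_{W_0}$ (since $|1+\Theta|$ does not tend to $0$ at infinity along horizontal lines, $\int_\R|1+\Theta(x+iy)|^2\,dx=+\infty$ for every $y>0$) forces $Q\equiv 0$, hence $G\equiv 0$.

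\emph{The main obstacle} is the vanishing $G\equiv 0$: it amounts to injectivity of the spectral reconstruction over entire functions that lie only in $\FF_{W_0}$, whereas $\{K_{t_n}\}$ is complete merely in $\HH(E)$. Genuinely, there are nonzero entire functions---$A$ itself, or $PA$ with $P$ of exponential type zero---vanishing at all $t_n$ with $F/E,\,F^\sharp/E\in\mathcal{N}_+$; they are excluded precisely by requiring $F\in\FF_{W_0}$ (which $A$ fails), so the area integral against $W_0$ must be used in an essential way, and checking that $QA\in\FF_{W_0}$ with $Q$ of exponential type zero forces $Q\equiv 0$ is the real technical content. This can be organized via the Phragm\'en--Lindel\"of-type argument sketched above, or via Clark theory: the $t_n$ are the atoms of the Clark measure $\sigma$ of $\Theta$ at $-1$ with masses $\mu_n$, and a function that is bounded, pseudocontinuable across $\R$ modulo $\Theta$, and square-summable against $\sigma$ necessarily belongs to $K_\Theta$---which is exactly the assertion $F\in\HH(E)$ with $\|F\|_E^2\asymp\sum_n|c_n|^2$.
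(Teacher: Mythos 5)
Your first inclusion $\HH(E)\subset\FF_W$ and the derivation of $\{c_n\}\in\ell^2$ from the $W_T$-part of the weight coincide with the paper's argument and are fine; the genuine gaps are in the uniqueness step $G\equiv 0$, which you yourself identify as the heart of the matter. First, the boundary bound $|F(x)/E(x)|\lesssim\|F\|_{\FF_W}$, $x\in\R$, on which you base ``$F/E,\,F^\sharp/E\in H^\infty(\C^+)$'', is false in general: a sub-mean-value estimate over $D(x+\tfrac i2,\tfrac12)$ controls the value at the centre $x+\tfrac i2$, not at $x$, and no disc centred at $x$ works because on its lower half the weight is $\big(|E^\sharp(z)|(1+|\Im z|)\big)^{-1}$, which is smaller than $1/|E(z)|$ there. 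Concretely, the test functions $f_n=E\sqrt{y_n}/(z-\overline z_n)$ from Lemma \ref{bab} satisfy $\|f_n\|_{\FF_W}\lesssim\|f_n\|_E\asymp 1$ (by the inclusion you already proved), while $|f_n(x_n)/E(x_n)|=y_n^{-1/2}$; so whenever $\inf_n y_n=0$ (i.e.\ $\phi'\notin L^\infty$, exactly the interesting case) no uniform bound on $\R$ holds. As a consequence, condition (b) of de Branges' Theorem 11 is not verified by your argument, so even the Smirnov-class membership of $F/E$ (hence of $Q=G/A$) is unproven, and the Clark-theory variant you mention presupposes the same unavailable boundedness. The paper avoids controlling $F/E$ on $\R$ altogether: it shows $|H(z)|\lesssim(|z|^2+1)(|\Im z|^{-2}+|\Im z|^{-1})$ for $H=F/A-\sum_n c_n\mu_n^{1/2}(z-t_n)^{-1}$ off the real axis and then averages $\log|H|$ over circles of radius $1$ to obtain $|H(z)|\lesssim |z|^2+1$ near $\R$, concluding that $H$ is a polynomial of degree at most $1$.

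Second, your final exclusion step is asserted rather than proved. The parenthetical claim that $\int_\R|1+\Theta(x+iy)|^2dx=+\infty$ for every $y>0$ ``since $|1+\Theta|$ does not tend to $0$ at infinity along horizontal lines'' is not a proof: nothing a priori prevents $1+\Theta(\cdot+iy)$ from lying in $L^2(\R)$ for some $y>0$ even though $1+\Theta\notin H^2$, and ruling this out is precisely the delicate point. The paper does it by noting that $1+\Theta(\cdot+iy)\in L^2(\R)$ would give $1+\Theta(\cdot+iy)\in H^2$, whence by the asymptotic criterion of \cite[Theorem 2]{bar2} one gets $\Theta(iy)=-1+q/y+o(1/y)$ and therefore $1+\Theta\in H^2$, contradicting $A\notin\HH(E)$; your sketch contains no substitute for this step. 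Moreover, even granting $A\notin\FF_{W_0}$, the implication ``$QA\in\FF_{W_0}$, $Q$ entire of exponential type zero, $Q\not\equiv0$ $\Rightarrow$ contradiction'' itself requires an argument, since a type-zero function can be small on substantial sets near its zeros; the paper's reduction to a polynomial of degree at most $1$ makes this immediate (a nonzero polynomial is bounded below off a neighbourhood of its single zero), whereas for a general type-zero $Q$ you would need Cartan-type lower bounds or a similar device. So the architecture of your proof is close to the paper's, but the two key points --- Smirnov-class membership without boundary control, and the passage from $1+\Theta(\cdot+iy)\in L^2(\R)$ to $1+\Theta\in H^2$ --- are missing.
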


\begin{proof} {\bf Inclusion} $\HH(E) \subset \FF_W$. 
We know that $\HH(E) \subset \FF_{W_0}$. Let $F\in \HH(E)$ 
have representation \eqref{sp}. Then for each disc $D_n=D(t_n, r_n)$ we have
\begin{align*}
\int_{D_n}\left|FW_T\right|^2 dm&=\int_{D_n}\bigg| 
\sum_{j\in\N}\frac{c_j\mu^{1/2}_{j}}{z-t_j} \bigg|^2 
\frac{\left|z-t_n\right|^2}{\mu_n r^{2}_{n}}dm(z) \\
&\le 2 \int_{D_n}\left(\frac{\left|c_n\right|^2}
{r^{2}_{n}}+\frac{\left|z-t_n\right|^2}{\mu_n r^{2}_{n}}
\bigg|\sum_{j\neq n}\frac{c_j \mu^{1/2}_{j}}{z-t_j}\bigg|^2\right)dm(z) \\
&\le 2\pi |c_n|^2 + 2
\int_{D_n} \frac{\left|z-t_n\right|^2}{\mu_n r^{2}_{n}}
\bigg(\sum_{j\neq n}\frac{\mu_{j}}{\left|z-t_j\right|^2}\bigg) 
\left\|c\right\|^{2}_{l^2} dm(z)
\end{align*}
Since for $z\in D_n$ and $j\neq n$, we have $\left|z-t_j\right|
>\frac{\left|t_n-t_j\right|}{2}$, and 
$$  
\int_{D_n}\left|z-t_n\right|^2dm(z)\asymp r^{4}_{n},
$$
we get
$$ 
\int_{D_n}\left|FW_T\right|^2 dm 
\lesssim\left|c_n\right|^2+\left\|c\right\|^{2}_{l^2} 
\bigg(\sum_{j\neq n}\frac{\mu_{j}}{\left|t_n-t_j\right|^2}\bigg)
\frac{ r^{2}_{n}}{\mu_n}.
$$
Summing the integrals over $D_n$  and using condition \eqref{rn} 
we conclude that \medskip
$F\in \FF_{W_T}$.

{\bf Inclusion} $\FF_W \subset \HH(E)$.
Let $F\in \FF_W$. Using analyticity of the function 
$\frac{F(z)(z-t_n)}{A(z)}$ in $D_n$, we have, for each $n$, 
\begin{align*}
\frac{1}{\mu_n}\left|\frac{F(z)(z-t_n)}{A(z)}\right|^2_{z=t_n}&=\frac{1}{\mu_n}\left|\frac{F(t_n)}{A'(t_n)}\right|^2\\
&\leq\frac{1}{\pi r^{2}_{n}\mu_n}\int_{D_n}\left|\frac{F(w)}{A(w)}\right|^{2}\left|w-t_n\right|^2 dm(w)\\
&=\frac{1}{\pi}\int_{D_n}\left|F(w)W_T(w)\right|^2dm(w).
\end{align*}
Then, for 
$$
c_n:=\frac{1}{\mu^{1/2}_{n}}\frac{F(t_n)}{A'(t_n)},
$$
we have $\{c_n\}\in\ell^2$. To show that $F\in \HH(E)$ we need to prove that  
$$
\frac{F(z)}{A(z)}=\sum_{n} \frac{c_n\mu^{1/2}_{n}}{z-t_n}, \qquad z\in \C.
$$
To do this, it is enough to show that the entire function
$$
H(z)=\frac{F(z)}{A(z)}-\sum_{n} \frac{c_n\mu^{1/2}_{n}}{z-t_n}, \qquad z\in \C,
$$
is identically zero ($H$ is entire because $A$ has simple zeros and so
the residues of $H$ at $t_n$ are zero). 
 
We know from the inclusion $F\in \FF_{W_0}$ that
$$
\bigg|\frac{F(z)}{E(z)}\bigg| \lesssim 1 +\frac{1}{|\Im z|}, \qquad z\in \C^+
$$
(see Section \ref{th2}). Recall that 
$A=E(1+\Theta)/2$. It follows from formulas \eqref{ert}--\eqref{ert0} that 
$|1+\Theta(z)|\ge 1-|\Theta(z)| \gtrsim \frac{\Im z}{|z|^2+1}$, whence
$$
\bigg| \frac{F(z)}{A(z)} \bigg| 
\lesssim (|z|^2+1)\bigg(\frac{1}{|\Im z|^2} +\frac{1}{|\Im z|}\bigg), \qquad z\in \C^+.
$$
The same estimate holds for $z\in\C^-$ since we can apply the above reasoning to 
$F^\sharp$. Finally, 
$$
\bigg|\sum_{n} \frac{c_n\mu^{1/2}_{n}}{z-t_n}\bigg|^2 \le \|\{c_n\}\|^2_{\ell^2}
\sum_{n} \frac{\mu_n}{|z-t_n|^2}\lesssim
\sum_n \frac{\mu_n}{t_n^2}\bigg|\frac{t_n}{z-t_n}\bigg|^2 \lesssim 
\frac{|z|^2}{|\Im z|^2}.
$$
Thus, $|H(z)| \lesssim (|z|^2+1)|(|\Im z|^{-2}+|\Im z|^{-1})$. Since
$$
\log|H(z)| \le \frac{1}{2\pi} \int_{|\zeta-z|=1} \log|H(\zeta)|\,|d\zeta|,
$$
we conclude that $|H(z)| \le C(|z|^2+1)$, $|\Im z| \le 1$. 
Thus, $H$ is a polynomial of degree at most $1$.  

We have
$$
F(z) = A(z)H(z) + A(z)\sum_{n}\frac{c_n\mu^{1/2}_{n}}{z-t_n}.
$$
The latter sum is in $\HH(E)$ and we already know that $\HH(E) \subset \FF_W$.
Hence, $AH\in \FF_W$ and in particular, $AH\in \FF_{W_0}$.
Since $A=(1+\Theta)E/2$, we conclude that, for any $y>0$,
$$
\int_{\R} |1+\Theta(x+iy)|^2 |H(x+iy)|^2 dx  <\infty.
$$
If $H$ is a nonzero polynomial we conclude that $1+\Theta(\cdot+iy) \in L^2(\R)$
for any $y>0$ and so $1+\Theta(\cdot +iy) \in H^2$. From this it is easy to deduce
that $1+\Theta\in H^2$, a contradiction to the fact that $A\notin \HH(E)$ 
by the choice of the spectral data. To see that $1+\Theta\in H^2$ one can use, 
e.g., the results of \cite{bar2}. By \cite[Theorem 2]{bar2}, for a nonconstant 
bounded analytic function $f$ in $\C^+$ with $\|f\|_\infty\le 1$, 
the inclusion $1-f\in H^2$ is equivalent to the asymptotics
$$
f(iy) = 1 - \frac{q}{y} +o\Big(\frac{1}{y}\Big), \qquad y\to+\infty,
$$
with some $q>0$. Thus, 
$$
\Theta(iy+iy_0) = -1 + \frac{q}{y} +o\Big(\frac{1}{y}\Big), \qquad y\to+\infty.
$$
Then, by the same theorem, $1+\Theta \in H^2$ 
(since it has the same asymptotics as $y\to+\infty$). This contradiction 
shows that $H\equiv 0$.
\end{proof}
\bigskip


\section{Weights depending on the imaginary part}
\label{th3}                                     

In this section we prove Theorem \ref{depy}. We need to prove only implication 
(i)$\Longrightarrow$(ii). Therefore, in what follows we assume that
$W(z) = \Phi(|y|)$, $y=\Im z$, where $\Phi: [0, \infty)\to (0, \infty)$
and, in view of \eqref{bab1}, 
$\inf_I \Phi>0$ for any bounded interval $I\subset [0, \infty)$.
We write $\FF_\Phi$ in place of $\FF_W$.

\begin{lemma}
\label{l2}
If $\HH(E) = \FF_\Phi$,  then there exists $M>0$ such that
\begin{equation}
   \label{rep}
M^{-1}\|K_{iy}\|_{E} \le \|K_{x+iy}\|_{E} \le M\|K_{iy}\|_{E}, 
\qquad x, y\in \R.
\end{equation}
In particular, for any $F\in \HH(E)$ and $y_1, y_2\in \R$, 
$F$ is bounded in $\{z: y_1\le \Im z\le y_2\}$.
\end{lemma}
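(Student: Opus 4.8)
The plan is to exploit the horizontal translation invariance of the weight. Since $W(z)=\Phi(|\Im z|)$ does not depend on $\Re z$, each real translation $U_t\colon F\mapsto F(\cdot+t)$ acts isometrically on $\FF_\Phi$: the change of variables $w=z+t$ gives $\|U_t F\|_{\FF_\Phi}^2=\int_\C|F(w)|^2\Phi(|\Im w|)^2\,dm(w)=\|F\|_{\FF_\Phi}^2$, where we used that $t$ is real, so $\Im(w-t)=\Im w$; in particular $U_t(\FF_\Phi)=\FF_\Phi$. By hypothesis $\FF_\Phi=\HH(E)$ as sets, so by the closed graph theorem there are constants $0<c_1\le c_2$ with $c_1\|\cdot\|_E\le\|\cdot\|_{\FF_\Phi}\le c_2\|\cdot\|_E$; hence each $U_t$ maps $\HH(E)$ into itself with $\|U_t\|_{\HH(E)\to\HH(E)}\le M:=c_2/c_1$, uniformly in $t\in\R$.

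Next I would transfer this bound to the reproducing kernels via the adjoint. For $F\in\HH(E)$ and $w\in\C$ one has $\langle U_tF,K_w\rangle_E=(U_tF)(w)=F(w+t)=\langle F,K_{w+t}\rangle_E$, so $U_t^*K_w=K_{w+t}$. Since $\|U_t^*\|=\|U_t\|\le M$, it follows that $\|K_{w+t}\|_E\le M\|K_w\|_E$ for all $w,t$. Taking $w=iy$ and $t=x$ yields $\|K_{x+iy}\|_E\le M\|K_{iy}\|_E$; taking $w=x+iy$ and $t=-x$ yields $\|K_{iy}\|_E\le M\|K_{x+iy}\|_E$. Together these give \eqref{rep}.

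For the final assertion, recall that $|F(z)|=|\langle F,K_z\rangle_E|\le\|F\|_E\,\|K_z\|_E$ for every $F\in\HH(E)$. The function $y\mapsto\|K_{iy}\|_E^2=K_{iy}(iy)$ is continuous on $\R$ (the kernel $(z,w)\mapsto K_w(z)$ is jointly continuous, being analytic in $z$ and conjugate-analytic in $w$), hence bounded by some $M'^2$ on the compact interval $[y_1,y_2]$. Combining this with \eqref{rep} gives $\|K_{x+iy}\|_E\le MM'$ for all $x\in\R$ and $y_1\le y\le y_2$, so $|F(z)|\le MM'\|F\|_E$ throughout the strip $\{y_1\le\Im z\le y_2\}$, which is the claimed boundedness.

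There is no real obstacle here; the only points requiring care are the isometry computation (immediate once one notes that $t$ is real) and the adjoint identity $U_t^*K_w=K_{w+t}$, together with the use of the closed graph theorem to upgrade the set equality $\FF_\Phi=\HH(E)$ to a uniform bound on $\|U_t\|_{\HH(E)\to\HH(E)}$.
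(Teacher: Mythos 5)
Your proof is correct and follows essentially the same route as the paper: the paper likewise deduces \eqref{rep} from the unitarity of real translations on $\FF_\Phi$ together with the identification of $\|K_z\|_E$ with the norm of the evaluation functional (your adjoint identity $U_t^*K_w=K_{w+t}$ is just a reformulation of that). The only difference is that you spell out the closed-graph and strip-boundedness details which the paper leaves implicit.
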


The proof of the lemma follows immediately from the fact that 
the translation operators $F \mapsto  F(\cdot + t)$, $t\in\mathbb{R}$, are 
unitary on $\FF_\Phi$ and from the formula  
$\|K_z\|_E = \sup_{F\in \HH(E), \|F\|_E\le 1} |F(z)|$.

\begin{lemma}
\label{expo}
If $\HH(E) = \FF_\Phi$,  then $E$ is of finite exponential type.
\end{lemma}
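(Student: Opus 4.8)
The plan is to show that $E$ is of bounded type in $\C^+$. Once this is known, $E^\sharp=\Theta E$ is of bounded type in $\C^+$ as well (since $\Theta\in H^\infty(\C^+)$), i.e.\ $E$ is of bounded type in $\C^-$; and by the classical theorem of Krein an entire function of bounded type in both half-planes is of finite exponential type, which is the assertion.

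First I would record a polynomial bound on horizontal strips. Since $\HH(E)\ne\{0\}$, $\Theta$ is non-constant, so one may pick $w_1,w_2\in\C^+$ with $\Theta(w_1)\ne\Theta(w_2)$. From $(z-\overline{w_j})K_{w_j}(z)=\tfrac{i}{2\pi}\big(\overline{E(w_j)}\,E(z)-\overline{E^\sharp(w_j)}\,E^\sharp(z)\big)$ and the non-vanishing of the determinant $\overline{E(w_1)E(w_2)}\big(\overline{\Theta(w_1)}-\overline{\Theta(w_2)}\big)$ (note $E(w_j)\ne0$, as $E$ has no zeros in $\overline{\C^+}$), one solves the resulting $2\times2$ system for $E$, obtaining $E=a_1(z-\overline{w_1})K_{w_1}+a_2(z-\overline{w_2})K_{w_2}$ with constants $a_1,a_2$. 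By Lemma \ref{l2} each $K_{w_j}$ is bounded on $\{|\Im z|\le1\}$, hence $|E(z)|\lesssim 1+|z|$ there; in particular $\log^+|E|$ has a positive harmonic majorant on $\{0<\Im z\le1\}$.

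The core of the proof concerns $\Im z\ge1$. Put $u(z)=\log\big(|E(z)|^2-|E^\sharp(z)|^2\big)=2\log|E(z)|+\log\big(1-|\Theta(z)|^2\big)$ for $z\in\C^+$. Here $\log|E|$ is harmonic and $\log(1-|\Theta|^2)$ is superharmonic, since $\Delta(1-|\Theta|^2)=-4|\Theta'|^2\le0$ forces $\Delta\log(1-|\Theta|^2)\le0$; thus $u$ is superharmonic in $\C^+$. On the other hand $\|K_z\|_E^2=\big(|E(z)|^2-|E^\sharp(z)|^2\big)/(4\pi\Im z)$, so Lemma \ref{l2} says precisely that $|u(z)-g(\Im z)|\le C_0$ on $\C^+$, where $g(y):=\log\big(|E(iy)|^2-|E^\sharp(iy)|^2\big)$ (a continuous function of $y>0$). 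Applying the Phragm\'en--Lindel\"of principle in each strip $\{a<\Im z<b\}$, $0<a<b$, to the subharmonic function $-u$ — which is bounded above there and satisfies $-g(\Im z)-C_0\le -u\le -g(\Im z)+C_0$ — shows that $-u$ stays below the affine interpolant of $-g$ between the two edges, i.e.\ $g$ lies above each of its chords up to the additive error $2C_0$; taking the chord over $[1,b]$ and letting $b\to\infty$ gives $g(y)\le C(1+y)$ for $y\ge1$, hence $u(z)\lesssim\Im z$ for $\Im z\ge1$. Finally, testing $\|k_z\|_2$ against the fixed reproducing kernel $k_i$ of $K_\Theta$ yields $1-|\Theta(z)|^2=4\pi\Im z\,\|k_z\|_2^2\gtrsim \Im z/(1+|z|^2)$, so
$2\log|E(z)|=u(z)+\log\tfrac{1}{1-|\Theta(z)|^2}\lesssim \Im z+\log(1+|z|^2)$ for $\Im z\ge1$. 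Together with the strip bound this exhibits a positive harmonic majorant of $\log^+|E|$ on all of $\C^+$ (use that $\Im z$ and $\log|z+i|\ge0$ are positive harmonic and $1+|z|\lesssim|z+i|$ in $\C^+$), so $E$ is of bounded type in $\C^+$.

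I expect the genuine obstacle to be the middle step: noticing that $\log(|E|^2-|E^\sharp|^2)$ is superharmonic and that Lemma \ref{l2} pins it down, up to a bounded error, to a function of $\Im z$ alone, which by Phragm\'en--Lindel\"of is then forced to grow at most linearly toward $i\infty$. Everything else reduces to the reproducing-kernel identity of Step 1 and standard facts (Phragm\'en--Lindel\"of in a strip, the lower estimate for $1-|\Theta|^2$, Krein's theorem).
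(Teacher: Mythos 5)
Your proof is correct, but it takes a genuinely different route from the paper's. The paper obtains the strip bound from $E/(z-z_0)\in\HH(E)$ (your two-kernel identity does the same job, and has the minor advantage of not needing $E$ to have zeros), and then, for $\Im z\ge 1$, argues by cases: if $\liminf_{y\to\infty}|E(iy)|<\infty$ it concludes via Phragm\'en--Lindel\"of in strips that $E$ is a polynomial, while if $|E(iy)|\to\infty$ it applies the Riesz--Herglotz representation to the positive harmonic function $\log C|zE(z)|$ in $\{\Im z\ge 1\}$, getting $\log|E(z)|\lesssim |z|^2$ in the half-plane and $\lesssim |z|$ along non-real rays, and finishes with Phragm\'en--Lindel\"of in small angles around $\R$ (using the Hermite--Biehler symmetry for $\C^-$). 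You instead exploit that $u=\log\big(|E|^2-|E^\sharp|^2\big)=\log\big(4\pi\,\Im z\,\|K_z\|_E^2\big)$ is superharmonic in $\C^+$ and, by Lemma \ref{l2}, equals $g(\Im z)+O(1)$; Phragm\'en--Lindel\"of in horizontal strips then makes $g$ lie above its chords up to a constant, hence grow at most linearly (the operative step is evaluating the chord inequality over $[1,b]$ at a fixed interior point, say $y=2$, to bound $g(b)$ affinely in $b$), which together with $1-|\Theta(z)|^2\gtrsim \Im z/(1+|z|^2)$ gives $\log|E(z)|\lesssim \Im z+\log(1+|z|)$ for $\Im z\ge 1$; bounded type in $\C^+$ follows from the harmonic-majorant criterion, bounded type in $\C^-$ from $E^\sharp=\Theta E$, and Krein's theorem yields finite exponential type. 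What each route buys: yours avoids the case distinction and the half-plane Herglotz representation and isolates a clean structural fact (approximate concavity of $y\mapsto\log\|K_{iy}\|_E^2\,y$ forced by translation-invariance of the norm), at the price of invoking Krein's theorem and the harmonic-majorant characterization of bounded type, which the paper's more hands-on growth estimates do not need. Two small points to tighten: the step ``$\Delta(1-|\Theta|^2)\le 0$ forces $\Delta\log(1-|\Theta|^2)\le 0$'' is true but not automatic as phrased --- either compute $\Delta\log(1-|\Theta|^2)=-4|\Theta'|^2/(1-|\Theta|^2)^2$ or note that an increasing concave function of a positive superharmonic function is superharmonic; and since $g(y)=u(iy)$ exactly, your chord estimate actually holds with error $C_0$ rather than $2C_0$ (harmless either way).
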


\begin{proof} Recall that $\|K_z\|_E^2 = 
|E(z)|^2 \frac{1-|\Theta(z)|^2}{4\pi \Im z}$, $z\in \C^+$. 
Then it follows from \eqref{rep} that 
\begin{equation}
   \label{trot}
|E(iy)|^2 (1-|\Theta(iy)|^2) \asymp
|E(x+iy)|^2 (1-|\Theta(x+iy)|^2), \qquad x, y\in \R,
\end{equation}
with the constants independent on $x, y$. Note that, by the formulas 
\eqref{ert}--\eqref{ert0}, 
$1-|\Theta(z)|^2 \gtrsim |z|^{-2}\Im z \gtrsim |z|^{-2}$, $\Im z\ge 1$, 
and so $|E(x+iy)| \lesssim |E(i|y|)|\cdot|z|$, $|y| \ge 1$. 

Also, by Lemma \ref{l2}, $|E(z)|\lesssim 1+|z|$, $-1\le \Im z\le 1$,
since $E/(z-z_0) \in \HH(E)$ for any zero $z_0$ of $E$. Thus, 
if $\liminf_{y\to +\infty} |E(iy)| <\infty$, then $E$ is a polynomial of degree
at most 1 by the Phragm\'en--Lindel\"of principle applied to the strips. 

Now assume that $\lim_{y\to +\infty}|E(iy)|=+\infty$. We also have, by 
\eqref{trot}, $|E(x+iy)| \geq y^{-1}|E(iy)|$, $y\ge 1$.
Hence, there exists
$C>0$ such that $C|zE(z)|\ge 2$, $\Im z\ge 1$. Then $h(z) = \log C|zE(z)|$
is a positive harmonic function in $\{\Im z\ge 1\}$, and we can write its Riesz
representation 
$$
h(z) = p(y-1) + \frac{y-1}{\pi}\int_\R  \frac{d\mu(t)}{(t-x)^2+(y-1)^2},
\qquad z=x+iy, \ y\ge 1,
$$
with some measure $\mu$ such that $\int_\R(1+t^2)^{-1} d\mu(t) <\infty$.
By obvious estimates of the Poisson integral, 
we get, for any $\vep\in (0, \pi)$,
\begin{align*}
|h(z)| & \lesssim |z|^2, \qquad \Im z\ge 2 \\
|h(z)| & \lesssim |z|, \qquad \vep\le \arg z\le \pi-\vep, \ \Im z\ge 2.
\end{align*}
Since $E$ is in the Hermite--Biehler class, analogous estimates hold 
in the lower half-plane. Since, moreover, $|E(z)| \lesssim |z|$ in 
$\{-2\le \Im z \le 2\}$, we conclude by the 
Phragm\'en--Lindel\"of principle applied to the angles $\{|\arg z|\le \vep\}$
and $\{|\arg z - \pi|\le \vep\}$ that $E$ is of finite exponential type. 
\end{proof}
\medskip

\begin{proof}[Proof of Theorem \ref{depy}] Let $\HH(E) = \FF_\Phi$. First we will 
show that $F(\cdot+ iy_0) \in L^2(\R)$ for any $F\in \HH(E)$ and $y_0\in\R$. Indeed, 
$$
|F(x+iy_0)|^2 \le \int_{D(x+iy_0, 1)} |F(\zeta)|^2 dm(\zeta),
$$
whence
\begin{gather*}
\int_\R  |F(x+iy_0)|^2 dx \lesssim 
\int_{y_0-1\le\Im \zeta\le y_0+1} |F(\zeta)|^2 dm(\zeta) \qquad \qquad\\
 \le \int_{y_0-1\le\Im \zeta\le y_0+1} |F(\zeta)\Phi(\zeta)|^2 
dm(\zeta) \cdot \sup\limits_{y_0-1\le\Im \zeta\le y_0+1} [\Phi(y)]^{-2} \le 
M^{-2} \|F\|^2_{\FF_\Phi},
\end{gather*}
where $M = \inf_{y_0-1\le y \le y_0+1} \Phi(y)>0$.

By Lemma \ref{expo}, $E$ is of finite exponential type and so 
the number $a$ defined by
$$
a = \limsup\limits_{y\to+\infty} \frac{\log|E(iy)|}{y}
$$
is finite. Also, for any zero $z_0$ of $E$, the function $G(z) = E(z)/(z-z_0)$
is in $\HH(E)$ and so in $L^2(\R)$. Since $G$ is of finite exponential type,
we conclude that $G$ belongs to some Paley--Wiener space. A function in the 
Paley--Wiener space has maximal growth along the imaginary axis and so $G\in PW_a$. 

In particular, $e^{iaz} G \in H^2$.
The function $G$ does not vanish in $\C^+$ and also 
$\limsup_{y\to+\infty} y^{-1} \log |e^{-ay}G(iy)| = 0$. We conclude that $e^{iaz} G$ 
is an outer function in $\C^+$
(since its canonical Smirnov--Nevanlinna factorization contains no factors 
of the form $e^{i\tau z}$, $\tau >0$), and it follows from standard 
estimates  of the Poisson integral that 
\begin{equation}
   \label{esio}
   a = \lim\limits_{y\to+\infty} \frac{\log|E(x+iy)|}{y}
\end{equation}
and the limit is uniform for $x\in [-1,1]$.

Now let $F\in\HH(E)$. Then $F\in L^2(\R)$ and $F/E$ is in $H^2$. Hence, 
$$
\limsup\limits_{y\to+\infty} \frac{\log|e^{-ay}F(iy)|}{y} = 
\limsup\limits_{y\to+\infty} \frac{\log|e^{-ay}E(iy)| + \log|E^{-1}(iy)F(iy)|}{y} \le 0.
$$                                                                  
Thus, $F\in PW_a$ and we conclude that $\HH(E) \subset PW_a$.

It remains to prove the converse inclusion $PW_a\subset \HH(E)$.  Put
$$
b= \sup\bigg\{b': \int_0^\infty e^{2b'y}\Phi^2(y)dy<\infty  \bigg\}.
$$
Since
$$
\int_{-1}^1 \int_0^\infty \frac{|E(x+iy)|^2}{|x+iy-z_0|^2}\Phi^2(y)dy dx <\infty
$$
and $|E(x+iy)|^2 \gtrsim e^{2a'y}$, $y\ge 0$,
for any $a'<a$ by \eqref{esio}, 
we conclude that $a\le b$. Now let $b'<b$. Then $PW_{b'} \subset \FF_\Phi$. Indeed, 
$\int_\R |f(x+iy)|^2 dx \le\|f\|^2_2 e^{2b'|y|}$ for any $f\in PW_{b'}$, whence
$$
\int_\C |f(x+iy)|^2 \Phi^2(|y|)dxdy \le 2\|f\|^2_2 \int_{0}^\infty e^{2b'y}
\Phi^2(y) dy<\infty.
$$
Thus, $PW_{b'} \subset \FF_\Phi \subset PW_a$ for any $b'<b$ and so $b\le a$.  

We have seen that $PW_{b'} \subset \FF_\Phi \subset PW_a$ for any $b'<a$. 
To complete the proof note that for any $g\in PW_{b'}$ 
we have $g/E \in L^2(\R)$. Now let $f\in PW_a$. Then 
$f = e^{iaz/2} f_1 + e^{-iaz/2} f_2$
where $f_1, f_2 \in PW_{a/2}$ and so $f/E \in L^2(\R)$. 
By the discussion in the beginning of Section \ref{th2}, it remains to show that $f/E$,
$f^\sharp/E$ are in the Smirnov class $\mathcal{N}_+$. Since both $f$ and $E/(z-z_0)$
are in $PW_a$, the function $f/E$ is of bounded type in $\C^+$. Finally, 
$$
\limsup\limits_{y\to+\infty} \frac{1}{y}\log\bigg|\frac{f(iy)}{E(iy)}\bigg| = 
\limsup\limits_{y\to+\infty} \bigg(\frac{1}{y}\log |e^{-ay} f(iy)| +
\frac{1}{y}\log \frac{e^{-ay}}{|E(iy)|}\bigg) \le 0. 
$$
Thus, $f/E$ (and similarly, $f^\sharp/E$) are in $H^2$, whence $f\in \HH(E)$. 
\end{proof}

\subsection{Final remarks}
Consider the following three statements: 
\smallskip
\begin{enumerate}
	\item [(i)] $\HH(E) = \FF_{\Phi}$ ($\Longleftrightarrow \HH(E) = PW_a$);
\smallskip
	\item [(ii)] $\sup_{t\in\R}\|U_t:\HH(E)\to\HH(E)\|<\infty$;
\smallskip
	\item [(iii)] $\|K_x\|_E \asymp \|K_0\|_E$, $x\in\R$,
i.e., $\phi'(x)|E(x)|^2 \asymp 1$.
\end{enumerate}
\smallskip
We have seen that (i)$\Longrightarrow$(ii)$\Longrightarrow$(iii). One can ask whether
converse to any of these implications is true. We do not know whether
(iii)$\Longrightarrow$(ii) or (ii)$\Longrightarrow$(i), but we can show that
(iii) does not imply (i). Namely, we will construct a de Branges space
that satisfies (iii),  but does not coincide as a set with the Paley--Wiener space.

\begin{example}
{\rm Given $\delta>0$, consider the sequence $(z_n)_{n\in\Z}$ defined by
$$  
z_n=\begin{cases}
n-\delta+in^{-4\delta},\ n>0,\\
n+\delta+i\left|n\right|^{-4\delta},\ n<0,\\
i,\ n=0,
\end{cases}
$$
and let $E(z)=\lim_{R\to\infty} \prod_{|z_n|\le R}(1-z/\overline z_n)$
be the corresponding Hermite--Biehler function.  
This product converges uniformly on compact sets and $E$
is a function of Cartwright class. This example 
was given in \cite{ls} where it was shown that $\HH(E) = PW_\pi$ 
if and only if $\delta<1/4$. However, we will show that for $1/4 \le\delta\le 1/2$
the function $E$ satisfies (iii).

In what follows let $\delta\in (0,1)$. We will estimate 
$E(x)$, $x\in \R$, by comparing it to the function $\sin \pi(x+i)$ whose modulus is 
comparable to a constant. These are well-known calculations which we include for the 
sake of completeness. Let $|x-k|\le 1/2$ and $k>0$. Then it is easy to show that
$$
\left|\frac{E(x)}{\sin \pi(x+i)}\right|
\asymp \prod_{n\in\Z}\left|\frac{1-\frac{x}{\overline z_n}}{1-\frac{x}{n-i}}\right|^{2}
\asymp A^{2}\left|\frac{1-\frac{x}{\overline z_k}}{1- \frac{x}{k-i}}\right|^{2},
$$
where 
$$
A=\prod_{n\geq 1, n\neq k}\left| \frac{\left(1-\frac{k}{n-\delta}\right)\left(1-\frac{k}{n+\delta}\right)}{\left(1-\frac{k}{n}\right)\left(1+\frac{k}{n}\right)} \right|.
$$
Note that 
$$
A= \prod_{n\geq 1, n\neq k}\left|1+\frac{k^2\left(-2n\delta+\delta^2\right)}
{\left(n-\delta\right)^{2}\left(n^2-k^2\right)} \right|
\asymp \prod_{n\geq 1, n\neq k}\left|  1 + \frac{2\delta nk^2}
{\left(n-\delta\right)^{2}\left(k^2-n^2\right)} \right|.
$$
We have also 
$$
\sum_{n\geq 1, n\neq k}\frac{n k^2}{\left(n-\delta\right)^{2}
\left(k^2-n^2\right)}\\
= \sum_{n\geq 1, n\neq k}\frac{k^2}{n\left(k^2-n^2\right)}+O(1) 
= \sum_{1\leq n\leq 2k-1, n\neq k} \frac{k^2}
{n\left(n^2-k^2\right)}+O(1)
$$
(where $O(1)$ denotes a quantity uniformly bounded in $k$) and
\begin{align*}
\sum_{1\leq n\leq 2k-1} \frac{k^2}{n\left(k^2-n^2\right)}
& =\sum_{1\leq n\leq 2k-1}\left[\left(\frac{1}{n}-\frac{1}{k+n}\right)+\frac{1}{2}
\left(\frac{1}{k-n}+\frac{1}{k+n}\right)\right]\\
&=\sum_{1\leq n\leq 2k-1}\left[\frac{1}{n}- \frac{1}{2(k+n)} \right]
= \log k+O(1).
\end{align*}
Thus 
\begin{align*}
\log A  & = 
\sum_{n\geq 1, n\neq k} \log\left|1 + \frac{2\delta nk^2}
{\left(n-\delta\right)^{2}\left(k^2-n^2\right)} \right| \\
& = 2\delta \sum_{n\geq 1, n\neq k}\frac{k^2}{n\left(k^2-n^2\right)}+O(1) = 2\delta\log k+O(1)
\end{align*}
and so
$$  
\left|E(x)\right|^2\asymp \left|k\right|^{4\delta}\left|x-z_k\right|^2,\qquad x\in\R, \ 
|x-k|\le 1/2.
$$

Now we estimate the derivative of the phase function: 
$$
\varphi'(x)=\sum_{n\geq 1, n\ne k} \frac{y_n}{\left(x-n+\delta\right)^2+y_n^2}
+\sum_{n< 0}  \frac{y_n}{\left(x-n-\delta\right)^2+y_n^2} + 
\frac{1}{x^2+1}
+\frac{k^{-4\delta}}{\left(x-k+\delta\right)^2+k^{-8\delta}}.
$$
By simple estimates of sums,
\begin{align*}
\sum_{|n|\ge 3k/2}\frac{n^{-4\delta}}{(x-n)^2}  \lesssim k^{-4\delta-1},
& \qquad
\sum_{k/2\le n \le 3k/2, n\ne k} \frac{n^{-4\delta}}{(x-n)^2}  \lesssim k^{-4\delta}, \\
\sum_{-3k/2\le n \le k/2, n\ne 0} \frac{|n|^{-4\delta}}{(x-n)^2} & \lesssim\frac{1}{k^2}
\sum_{-3k/2\le n \le k/2, n\ne 0} |n|^{-4\delta}.
\end{align*}
If $0<4\delta<1$, then the last expression is $\asymp k^{-4\delta-1}$, while 
for $1< 4\delta \le 2$ it is comparable with $k^{-2}\lesssim k^{-4\delta}$.
Finally, note that
$$
\frac{k^{-4\delta}}{\left(x-k+\delta\right)^2+k^{-8\delta}} \gtrsim k^{-4\delta},
\qquad |x-k|\le 1/2.
$$
We conclude that $\phi'(x) \asymp k^{-4\delta} |x-z_k|^{-2}$, $|x-k|\le 1/2$, 
when $0<\delta\le 1/2$. Thus, $\phi'(x)|E(x)|^2\asymp 1$ for $0<\delta\le 1/2$ 
(and it is easy to see from the above estimates that it is no longer true for
$\delta>1/2$). However, for $1/4 \le\delta\le 1/2$, $\HH(E) \ne PW_\pi$.
\medskip
\\
{\bf Problem.} It would be interesting  to describe the 
de Branges spaces $\HH(E)$ satisfying (iii) or, especially, (ii). }
\end{example}


\bibliographystyle{plain}

\end{document}